\def\smallspace{ \renewcommand{\baselinestretch}{1.4}\small\normalsize }
\newtheorem{lemma}{Lemma}
\newtheorem{proposition}{Proposition}
\newtheorem{theorem}{Theorem}
\newtheorem{remark}{Remark}
\newcommand{\revxzb}[1]{\textcolor{black}{#1}}
\newcommand{\revxzc}[1]{\textcolor{black}{#1}}
\begin{document}
	\pagestyle{plain}
	
	\hbox{\ }\vskip 0pt\thispagestyle{empty}
	\centerline{\LARGE LP Relaxations for Routing and Wavelength Assignment with }\vskip 8pt
	\centerline{\LARGE Partial Path Protection: Formulations and Computations}
	\vskip 12pt
        \centerline{Xianyan Yang\textsuperscript{a}, Junyan Liu\textsuperscript{b}, Fan Zhang\textsuperscript{b}, Fabo Sun\textsuperscript{c}, Feng Li\textsuperscript{d}, Zhou Xu \textsuperscript{a}\footnote{Corresponding author ({zhou.xu@polyu.edu.hk})}}
        \vskip 8pt
        \centerline{\textsuperscript{a} Department of Logistics and Maritime Studies, Hong Kong Polytechnic University, Hong Kong}
        \vskip 2pt
        \centerline{\textsuperscript{b} Theory Lab, 2012 Labs, Huawei Technologies, Co. Ltd}
        \vskip 2pt
        \centerline{\textsuperscript{c} Optical Product Research Department, Huawei Technologies, Co. Ltd}
        \vskip 2pt
        \centerline{\textsuperscript{d} School of Management, Huazhong University of Science and Technology, China}
	
	\vskip 8pt
	\smallspace
	\allowdisplaybreaks
	\centerline{\bf Abstract}
As a variant of the routing and wavelength assignment problem (RWAP), the RWAP with partial path protection (RWAP-PPP) aims to design a reliable optical-fiber network for telecommunications. It needs to assign paths and wavelengths of the network to satisfy communication requests, not only in a normal working situation, but also in every possible failure situation where an optical link of the network has failed. The existing literature lacks any studies on relaxations that can be solved efficiently to produce tight lower bounds on the optimal objective value of the RWAP-PPP. As a result, the quality of solutions to the RWAP-PPP cannot be properly assessed, which is crucial to telecommunication service providers in their bidding to customers and their determination of service improvements. 

\revxzc{Due to the presence of numerous possible failure situations, developing effective lower bounds for the RWAP-PPP is challenging.} To tackle this challenge, we formulate and analyze different linear programming (LP) relaxations of the RWAP-PPP. Among them, we introduce a novel LP relaxation that yields promising lower bounds for the RWAP-PPP. To solve this LP relaxation, we develop a Benders decomposition algorithm, for which a set of valid inequalities is incorporated to enhance the performance. Computational results based on publishable practical networks, including those with hundreds of nodes and edges, 
demonstrate the effectiveness of the newly proposed LP relaxation and the efficiency of its solution algorithm. The obtained lower bounds achieve average optimality gaps of only 8.6\%. \revxzc{Compared with the lower bounds obtained from a direct LP relaxation of the RWAP, which have average optimality gaps of 36.7\%, significant improvements are observed. Consequently, our new LP relaxation and its solution algorithm can be used to effectively assess the solution quality for the RWAP-PPP, holding significant research and practical values.}

	\vskip 4pt
	\noindent{\sl Keywords: Network optimization, Telecommunications, Routing and wavelength assignment, Partial path protection, LP relaxation, Benders decomposition}
	
	\section{Introduction}
	\label{sec:introduction}
	In a Wavelength Division Multiplexing (WDM) optical-fiber telecommunication network, nodes are commonly equipped with network devices such as transmitters, receivers, amplifiers, and switches. They are interconnected by links, which are established using optical fibers. Typically, each link between a pair of connected nodes consists of two optical fibers, allowing for bidirectional communication between them. The WDM technique empowers the transmission of a diverse range of telecommunication signals simultaneously through a single optical fiber by utilizing multiple wavelengths. To fulfill a communication request, \revxzc{a working path that consists of interconnected links spanning from the request's origin to its destination needs to be allocated.} In the absence of wavelength converters, which are very costly, each request's working path is required to occupy the same wavelength on all its links for signal transmission. This is known as \textit{the wavelength continuity constraint}. As a result, \revxzc{each request needs to be assigned a working wavelength for signal transmission throughout all the links of its entire working path.} For any two requests whose allocated working paths share one or more optical links of the network, they cannot be assigned the same working wavelength. This is known as \textit{the wavelength no-clash constraint}.
 
    Given a WDM optical-fiber telecommunication network and a set of communication requests, the \textit{Routing and Wavelength Assignment Problem (RWAP)} involves determining the assignment of both a working path and a working wavelength for each communication request, so that both the wavelength continuity constraint and the wavelength no-clash constraint are satisfied. The RWAP and its variants have been extensively studied due to its wide applications in telecommunication, where various objectives and additional practical constraints have also been taken into account (see, e.g., \citealt{jaumard2006ilp}). However, in many practical situations, solving the RWAP only does not guarantee that the assigned working paths and wavelengths will always fulfill all communication requests. This is primarily because daily operational challenges, such as broken fibers, cause failures in optical links of the network, disrupting the communications.
    
    To protect all communication requests from link failures within the network, it becomes necessary to assign each request backup paths and backup wavelengths, in addition to its working path and working wavelength, forming a  survival optical-fiber network \citep{kennington2003wavelength,zhang2019survivable}.  
    Among various protection schemes, \textit{the full path protection scheme} stands out as the simplest approach. This scheme requires that each communication request must possess edge-disjoint working and backup paths, so that the same backup path can be utilized to fulfill the request in the event of every possible link failure of its working path. In contrast to the full path protection scheme, \textit{the partial path protection scheme} introduces a more complicated approach. Under this scheme, each request is assigned a backup path and a backup wavelength for every possible link failure of its working path. These backup paths and backup wavelengths are allowed to differ for different link failures. A subset of all possible failed links is given, while certain link failures not within this subset are considered either impossible or not of concern. For each possible failed link, the corresponding backup path assigned to a request must exclude that specific failed link, but it does not need to be edge-disjoint from the request's working path. Although more complicated, the partial path protection scheme offers better utilization of the optical links within the network, making it intriguing to practitioners \citep{wang2002partial,Bakri2012}.

    In this paper, we investigate linear programming relaxations for the RWAP with partial path protection (or RWAP-PPP in short). The RWAP-PPP involves determining the assignment of working paths and working wavelengths, along with backup paths and backup wavelengths, for each communication request. Our primary focus is on the problem setting of RWAP-PPP as encountered in prominent telecommunication operators in China, such as China Mobile. In this problem setting, each link of the network is assumed to be undirected. This assumption is based on the requirement that for every communication request, which is defined as a pair of nodes, both the forward and backward signal transmissions between these two nodes are required to be assigned the same paths and wavelengths. This requirement applies to both working and backup purposes, primarily for simplifying solution design and implementation. 
    In addition, the objective value to be minimized in our problem setting of the RWAP-PPP is the total number of wavelengths assigned over all the links within the network, which includes those assigned for the working paths and for the backup paths. This objective value has been studied in the RWAP literature \citep{banerjee2000wavelength,jaumard2006ilp}. It is also served as the crucial performance indicator for those prominent telecommunication operators in China, when evaluating the quality of RWAP-PPP solutions provided by communications technology solution providers like Huawei. Once the communication network is established, a smaller value of this objective indicates a lower occupation of wavelength resources, enabling more available wavelengths for the telecommunication operators to accommodate additional unforeseen demands of communication requests. 
    
    It is worthy to note that the relaxation model and solution method developed in our study can be adapted to other problem settings of the RWAP-PPP. This includes settings considering other objective functions considered in the RWAP, such as to minimize the total number of wavelengths assigned to all communication requests \citep{lee2002optimization,hu2004traffic}, or to minimize the maximum number of wavelengths assigned over all the links \citep{jaumard2006ilp}.

\subsection{Existing Studies on RWAP and RWAP-PPP}
	The RWAP is known to be an NP-hard problem  \citep{chlamtac1992lightpath}. To tackle the RWAP, various integer programming (IP) models have been derived, and both exact and heuristic algorithms have also been developed. \cite{jaumard2006ilp,jaumard2007comparison} propose and compare several IP models of the RWAP, including arc-based, link-based, and path-based models. 
    \cite{jaumard2009column} propose a new IP model based on a maximum-independent-set formulation of the RWAP, and utilize this model to develop a branch-and-bound algorithm, which is used to develop both exact and heuristic algorithms for the RWAP. These IP models are also utilized to develop column-generation-based heuristic algorithms (see, e.g., \citealt{dawande2007traffic,agarwal2016near,jaumard2017efficient}). Computational experiments show that these algorithms can produce near-optimal solutions to instances of 14$\sim$90 nodes, with optimality gaps around 0.1\%$\sim$15.0\%. 
    \revxzc{
    Several other heuristics have also been developed for the RWAP by applications of graph coloring algorithms (\citealt{noronha2006routing}), greedy-based heuristics (\citealt{belgacem2014post}) , and machine learning techniques (\citealt{di2022deep, rai2023analysis}), etc.}
    Additionally, \revxzc{\cite{daryalal2022stochastic} develop a decomposition-based solution approach for a two-stage stochastic RWAP where incorporating requests' uncertainty.}
 
	
	Since the RWAP is a special case of the RWAP-PPP with no possible link failure, the RWAP-PPP is also an NP-hard problem. However, existing studies in the literature focused on only limited special cases of the RWAP-PPP and its variants. For the special case where assignments of wavelengths are not taken into account, \cite{truong2006overlapped} and \cite{truong2007using} develop two heuristic algorithms for a segment-protection variant of the RWAP-PPP, but without examining their optimality gaps in their computational experiments. 
    For such a special case, several existing studies investigate the full path protection variant of the problem, where each communication request needs to be assigned the same backup path for all possible link failures. For example, \cite{jaumard2006backup} derive IP formulations and solve them directly by an optimization solver.  \cite{rocha2008revisiting} and \cite{agarwal2014survivable} develop two heuristics \revxzb{based on} a column-generation solution approach.
    \cite{dey2019offline} develop another heuristic algorithm that leverages the concept of streams in designing backup paths. A stream is defined as a path that combines multiple backup paths of requests, each having disjoint working paths. The stream approach enables rapid communication recovery in the event of link failures while ensuring efficient utilization of wavelength resources. Moreover, when the failure scenario is given, \cite{ dahl1998cutting} propose a cutting-plane algorithm, and \cite{garg2008models} develop a Benders decomposition-based heuristic, to design a survivable network by optimizing the assignment of backup paths. \revxzc{\cite{botton2013benders} and \cite{arslan2020flexible} develop heuristics based on branch-and-cut algorithms to solve survivable network design problems with hop constraints, which restrict the number of links included in each of the working paths and backup paths.}   
    \revxzc{Taking into account the allocation of backup paths and backup wavelengths, \cite{jaumard2012path} present a column-generation-based heuristic. However, their approach is limited to finding nearly optimal solutions for instances with a maximum of 24 nodes only. For the full path protection variant of the problem, \cite{kennington2003wavelength} develop a heuristic based on cycle construction, without examining their optimality gaps.  Additionally, \cite{cseker2023routing} formulate the problem as a quadratic unconstrained binary program and develop a Digital Annealer algorithm based on simulated annealing. }
    

	
Compared with these special cases of various variants studied in the literature and mentioned above, the general case of the RWAP-PPP, considered in this paper, involves significantly more decisions and constraints to optimize, and is much more challenging to solve. Most practical optical networks are very large, with hundreds of nodes, links, and requests. To our knowledge, in practice only heuristic methods have been employed to tackle large-scale instances of the RWAP-PPP. \revxzc{In the literature, several heuristics have been developed for some restricted variant of the RWAP-PPP (see, e.g., \citealt{wang2002partial, xue2007partial,Bakri2012,KOUBAA201456}). However, the quality of the solutions obtained by these heuristics has not been examined, except for two very small  instances of only 14 and 15 nodes considered in \cite{KOUBAA201456}.}

\subsection{Importance of Effective Lower Bounds} 
 \revxzc{While practitioners have developed some heuristics to solve the RWAP-PPP in practical scenarios, they also need effective lower bounds that  can closely approximate the optimal objective value of the problem and can be computed efficiently. Such effective lower bounds are of great importance in both the research and practical applications of the RWAP-PPP. These lower bounds can serve as a benchmark for evaluating the quality of heuristic solutions obtained for the RWAP-PPP.} Due to the lack of such lower bounds, existing studies on heuristics for the RWAP-PPP cannot properly evaluate the gap of their solutions from the optimal objective values. Moreover, in practice, telecommunication solution providers, including Huawei, require such lower bounds to evaluate the potential of improvement on its solution. This is critical in their solution bidding to customers, as well as in their determination of service improvements. They can justify allocating additional resources for improving a specific solution or service plan only when its objective value significantly deviates from a lower bound that is known to be close to the optimal objective value. The availability of such a lower bound can therefore guide their allocation of resources towards areas where substantial improvements can be achieved.

However, effective lower bounds for the RWAP-PPP are currently unknown and have received limited attention in the existing literature. The LP relaxation derived directly from the IP model of the RWAP-PPP is hindered by a significant number of decision variables. As a result, solving it within an affordable running time is not possible, even for some small-scale instances encountered in practice. Additionally, although the optimal objective value of the RWAP provides a valid lower bound, it often deviates significantly from the actual optimal objective value. To address this research gap, our paper focuses on investigating linear programming relaxations for the RWAP-PPP.

 \subsection{Our Contributions} 
 Due to the inherent complexity of problem instances encountered in practice, developing effective lower bounds for the RWAP-PPP is challenging. Such problem instances typically involve numerous failure situations, thousands of requests, and hundreds of nodes and edges in their networks. \revxzc{To tackle this challenge, we derive a novel LP relaxation of the RWAP-PPP and develop an efficient Benders decomposition algorithm to solve the relaxation to exact optimality.}
 
The new relaxation is derived based on the direct LP relaxation of the RWAP-PPP, by relaxing constraints that link decisions on working paths and wavelengths and decisions on backup paths and wavelengths. We show that the model size of such a new relaxation can be further reduced, without changing its optimal objective value, by excluding decisions on working paths and wavelengths, and by aggregating decisions associated with the same wavelength and the same origin of requests. We then prove that our newly proposed LP relaxation can provide a tighter lower bound for the RWAP-PPP than that obtained from the direct LP relaxation of the RWAP, and that the improvement can be arbitrarily large in certain extreme situations.
	
After further analyzing the properties of our newly proposed LP relaxation, we have developed an efficient Benders decomposition algorithm to solve the relaxation. During each iteration of the Benders decomposition algorithm, its first step involves solving a restricted master problem. Subsequently, the optimal solution of the restricted master problem is utilized to expand the constraint set of the restricted master problem. This iteration process is repeated until the optimal solution of the restricted master problem converges to an optimal solution of our newly proposed LP relaxation. \revxzc{To accelerate the Benders decomposition algorithm, the restricted master problem is carefully defined by incorporating a set of valid inequalities.}
 
 We have conducted a computational experiment by using instances on some practical networks. These instances will be made available online for future follow-up studies. The results show that the Benders decomposition algorithm can solve our new LP relaxation to optimality for all the instances efficiently. Our new LP relaxation method is found empirically to provide significantly better lower bounds than those obtained from the direct LP relaxation of the RWAP. We observe improvements of approximately 10.8\% to 52.6\% in the quality of the lower bounds. Additionally, the optimality gaps between our new lower bounds and the objective values of the best-known solutions are only 5.2\% to 14.1\%. These results highlight the effectiveness of our newly proposed LP relaxation and the developed Benders decomposition algorithm in providing accurate estimations of the optimal objective values, enabling us to effectively assess the quality of solutions for the RWAP-PPP.
	
	The remainder of this paper is organized as follows. After presenting the problem definition of the RWAP and RWAP-PPP and their IP formulations in Section~\ref{sec:definition}, 
	we derive some IP relaxations in Section~\ref{sec:IP:relax}. Based on these IP relaxations, we derive and analyze various LP relaxations in Section~\ref{sec:LP:relax}, from which we propose a new LP relaxation that has much fewer decision variables and constraints than a direct LP relaxation of the RWAP-PPP, and that can produce much tighter lower bounds than the direct LP relaxation of the RWAP. In Section~\ref{sec:IP:relax:solution}, we then develop a Benders decomposition algorithm to solve the new LP relaxation. The computational experiment is reported in Section~\ref{sec:experiments}, and the paper is concluded in Section~\ref{sec:conclusions}.
	
\section{Problem Definition and IP Formulation}
\label{sec:definition}
Consider an optical-fiber telecommunication network represented by an undirected graph $G=(V,E)$ with a vertex set $V$ and an edge set $E$. Each vertex $v\in V$ indicates a node $v$ of the telecommunication network, and \revxzc{each edge $e=\{u,v\}\in E$ with $u,v\in V$ indicates an optical link for communication between a pair of nodes $u$ and $v$ of the telecommunication network, which can be from $u$ to $v$ and from $v$ to $u$. (It is possible that multiple edges may exist between a pair of nodes.) Let $K=\{\kappa_1,\kappa_2,\cdots,\kappa_{|K|}\}$ denote the set of wavelengths available on each optical link for their use to facilitate communications on the link. For each $e\in E$, to facilitate communications on link $e$, at least one wavelength in $K$ needs to be assigned to $e$. Each wavelength $\kappa\in K$ can be assigned at most once to the same link.}
	
Let $D$ indicate a set of communication requests, where each request $d\in D$ is represented by an origin-destination pair $(s_d,t_d)$, $s_d\in V$ and $t_d\in V$, indicating a request of communication from node $s_d$ to node $t_d$. To satisfy each request $d\in D$, one needs to assign $d$ a working path $P^0_d$ from $s_d$, through some links of $G$, to $t_d$. One also needs to assign all links of the working path $P^0_d$ the same working wavelength in $K$, which is denoted by $\lambda^0_d\in K$. This ensures the satisfaction of a \textit{wavelength continuity constraint}. Moreover, to ensure that every wavelength of a link is used at most once in satisfying the communication requests by working paths, the following \textit{no-clash constraint} also needs to be satisfied: 
     For any two requests $d$ and $d'$, if their assigned working wavelengths $\lambda^0_d$ and $\lambda^0_{d'}$ are the same, then their assigned working paths $P^0_{d}$ and $P^0_{d'}$ cannot share any common links. 
	The routing and wavelength assignment problem (or RWAP in short) is to find such assignments of working paths and working wavelengths that minimize the total number of assigned wavelengths over all the links.
	
	The RWAP-PPP is an extension of the RWAP, and it aims to ensure that all communication requests can still be satisfied in every possible failure situation where an optical link of the telecommunication network has failed. In particular, let $\Pi \subseteq E$ indicate a subset of links that can possibly fail. For each link $\tau\in \Pi$, we need to determine a backup path $P^\tau_d$ as well as assign all links of $P^\tau_d$ the same backup wavelength $\lambda^\tau_d\in K$, so as to satisfy each request $d\in D$. Consider the following two cases of request $d$:
	\begin{itemize}
	    \item If its working path $P^0_d$ does not contain $\tau$, request $d$ can still be satisfied by $P^0_d$ and its working wavelength $\lambda^0_d$, so that we require $P^\tau_d=P^0_d$ and $\lambda^\tau_d=\lambda^0_d$.
	    \item If its working path $P^0_d$ contains $\tau$, one needs to assign $d$ a different path that does not contain $\tau$, and so it is required that $\tau$ cannot appear in $P^\tau_d$.
	\end{itemize}
	All the backup paths $P^{\tau}_d$ for $d\in D$ also need to satisfy the following \textit{no-clash constraint}, 
	\begin{itemize}
	    \item For any two requests $d$ and $d'$, if their assigned backup wavelengths $\lambda^\tau_d$ and $\lambda^\tau_{d'}$ are the same, their assigned backup paths $P^\tau_{d}$ and $P^\tau_{d'}$ cannot share any common links. 
	\end{itemize}
	
The RWAP-PPP is to find such assignments of working paths and working wavelengths as well as backup paths and backup wavelengths that minimize the total number of assigned wavelengths over all the links. See an illustrative example of the problem instance in Appendix~\ref{sec:app:eg:rwappp}.

The RWAP-PPP can be formulated as an IP model. To establish such an IP model, we need to first introduce the following parameters:
\begin{itemize}
    \item For each (undirected) link $e=\{u,v\}\in E$, let $\alpha_1(e)=(u,v)$ indicate a directed arc from node $u$ to node $v$ \revxzc{associated with edge $e$}, and $\alpha_2(e)=(v,u)$ indicate a directed arc from $v$ to $u$ \revxzc{associated with edge $e$}. 
    \item Let $A=\{\alpha_1(e):e\in E\}\cup \{\alpha_2(e):e\in E\}$ denote the set of all directed arcs derived from links in $E$. 
    \item For each node $v\in V$, let $A^+(v)\subseteq A$ indicate the set of outgoing arcs from $v$, and $A^-(v)\subseteq A$ indicate the set of incoming arcs to $v$.
\end{itemize} 
Next, we define the following decision variables:
\begin{itemize}
    \item For each $d\in D$, $a\in A$, and $\kappa\in K$, let $x^{\kappa,a}_{d}\in \{0,1\}$ indicate a binary variable, which equals $1$ if and only if the working path of request $d$ from $s_d$ to $t_d$ goes through arc $a$, and $\kappa$ is assigned as the working wavelength.
    \item For each $\tau\in \Pi$, $d\in D$, $a\in A$, and $\kappa\in K$, let $y^{\kappa,a}_{\tau,d}\in \{0,1\}$ indicate a binary variable, which equals $1$ if and only if in the situation when link $\tau$ has failed, the backup path of request $d$ from $s_d$ to $t_d$ goes through arc $a$, and $\kappa$ is assigned as the backup wavelength.    
    \item For each $e\in E$ and $\kappa\in K$, let $w^{\kappa,e}\in \{0,1\}$ indicate a binary variable, which equals $1$ if and only if wavelength $\kappa$ is assigned to the communication of link $e$.
\end{itemize}

Accordingly, the RWAP-PPP can be formulated into the following IP model:
\begin{align}
\mbox{($\mathrm{IP_{RWAP-PPP}}$)} \quad & \min \sum_{\kappa\in K}\sum_{e\in E}w^{\kappa,e} \label{eqn:ip:obj}\\
\mbox{s.t.}~ 
& \sum_{\kappa\in K}\sum_{a \in A^+(s_d)}x^{\kappa,a}_{d} = 1, \mbox{ $\forall d\in D$,} \label{eqn:ip:x:s}\\ 
& \sum_{\kappa\in K}\sum_{a \in A^-(s_d)}x^{\kappa,a}_{d} = 0, \mbox{ $\forall d\in D$,} \label{eqn:ip:x:s-}\\ 
& \sum_{a \in A^-(v)}x^{\kappa,a}_{d} = \sum_{a \in A^+(v)}x^{\kappa,a}_{d}, \mbox{ $\forall d\in D, ~\kappa\in K,~v\in V\setminus\{s_d,t_d\}$}, \label{eqn:ip:x:v}\\
& \sum_{d\in D}(x^{\kappa,\alpha_1(e)}_{d}+x^{\kappa,\alpha_2(e)}_{d})  \leq w^{\kappa,e}, \mbox{ $\forall \kappa\in K,~e\in E$}, \label{eqn:ip:def:w1}\\
& \sum_{a \in A^+(s_d)}\sum_{\kappa\in K}y^{\kappa,a}_{\tau,d} = 1, \mbox{ $\forall \tau\in \Pi,~\forall d\in D$}, \label{eqn:ip:y:s}\\
& \sum_{a \in A^-(s_d)}\sum_{\kappa\in K}y^{\kappa,a}_{\tau,d} = 0, \mbox{ $\forall \tau\in \Pi,~\forall d\in D$}, \label{eqn:ip:y:s-}\\
& \sum_{a \in A^-(v)}y^{\kappa,a}_{\tau,d} = \sum_{a \in A^+(v)}y^{\kappa,a}_{\tau,d}, \mbox{ $\forall \tau\in \Pi,~d\in D,~\kappa\in K,~v\in V\setminus\{s_d,t_d\}$}, \label{eqn:ip:y:v}\\
&  \sum_{d\in D}(y^{\kappa,\alpha_1(e)}_{\tau,d} + y^{\kappa,\alpha_2(e)}_{\tau,d}) \leq w^{\kappa,e}, \mbox{ $\forall \tau\in \Pi,~\kappa\in K,~e\in E$}, \label{eqn:ip:def:w2}\\
& x^{\kappa,a}_{d} - \sum_{\kappa'\in K}(x^{\kappa',\alpha_1(\tau)}_d+x^{\kappa',\alpha_2(\tau)}_d)\leq y^{\kappa,a}_{\tau,d}, \mbox{ $\forall \tau\in \Pi,~d\in D,~\kappa\in K,~a\in A$},\label{eqn:ip:xy1}\\
& y^{\kappa,a}_{\tau,d}\leq x^{\kappa,a}_{d} + \sum_{\kappa'\in K}(x^{\kappa',\alpha_1(\tau)}_d+x^{\kappa',\alpha_2(\tau)}_d), \mbox{ $\forall \tau\in \Pi,~d\in D,~\kappa\in K,~a\in A$},\label{eqn:ip:xy2}\\
& y^{\kappa,\alpha_1(\tau)}_{\tau,d}+y^{\kappa,\alpha_2(\tau)}_{\tau,d}=0, \mbox{ $\forall \tau\in \Pi,~d\in D,~\kappa\in K$},\label{eqn:ip:yy}\\
& x^{\kappa,a}_{d}\in \{0,1\},\mbox{ $\forall
d\in D,~\kappa\in K,a\in A$}, \label{eqn:ip:x}\\
& w^{\kappa,e}\in \{0,1\}, \mbox{ $\forall
\kappa\in K,~e\in E$}. \label{eqn:ip:w}\\
& y^{\kappa,a}_{\tau,d}\in \{0,1\}, \mbox{ $\forall
\tau\in \Pi,~d\in D,~\kappa\in K,a\in A$}, \label{eqn:ip:y}
\end{align}
In model $\mathrm{IP_{RWAP-PPP}}$ above, the objective (\ref{eqn:ip:obj}) is to minimize the total number of wavelengths assigned to the links of the network. Constraints (\ref{eqn:ip:x:s}) ensure that each request $d$ has exactly one arc leaving from its origin $s_d$ in its working path, and it is assigned exactly one working wavelength on this arc.  Constraints (\ref{eqn:ip:x:s-}) ensure that each request $d$ does not have any arcs towards its origin $s_d$ in its working path, and it is not assigned any working wavelength on such arcs. The balancing constraints (\ref{eqn:ip:x:v}), in conjunction with (\ref{eqn:ip:x:s}), guarantee that each request $d$ is assigned a working path where all links are assigned the same working wavelength. For each request $d$, it is important to note that only its origin $s_d$ and destination $t_d$ are not involved in the balancing constraints (\ref{eqn:ip:x:v}). Accordingly, from (\ref{eqn:ip:x:s})--(\ref{eqn:ip:x:v}), we can see that the working path assigned to request $d$ must begin at $s_d$ and end at $t_d$. 

Moreover, constraints (\ref{eqn:ip:y:s})--(\ref{eqn:ip:y:v}) ensure that in the situation where link $\tau\in \Pi$ has failed, each $d\in D$ is assigned a backup path in which all links are assigned the same backup wavelength. Constraints~(\ref{eqn:ip:def:w1}) and (\ref{eqn:ip:def:w2}) ensure that the no-clash constraints are satisfied by working paths, backup paths, and their assigned wavelengths. Constraints~(\ref{eqn:ip:xy1}) and (\ref{eqn:ip:xy2}) become redundant if $\sum_{\kappa'\in K}(x^{\kappa',\alpha_1(\tau)}_d+x^{\kappa',\alpha_2(\tau)}_d)=1$. However, if $\sum_{\kappa'\in K}(x^{\kappa',\alpha_1(\tau)}_d+x^{\kappa',\alpha_2(\tau)}_d)=0$, indicating that link $\tau$ is not contained in the working path of request $d$, then constraints~(\ref{eqn:ip:xy1}) and (\ref{eqn:ip:xy2})
imply that $y^{\kappa,a}_{\tau,d}=x^{\kappa,a}_{d}$. This ensure that in the situation where link $\tau$ has failed, for each request $d$, if its working path does not contain $\tau$, then its backup path and backup wavelength are the same as its working path and working wavelength, respectively. Additionally, constraints~(\ref{eqn:ip:yy}) ensure that in the situation where link $\tau$ has failed, no backup path can contain link $\tau$. Constraints (\ref{eqn:ip:x})--(\ref{eqn:ip:w}) are binary constraints of the decision variables. Since $|\Pi|$ and $|A|$ are both in $O(|E|)$, model $\mathrm{IP_{RWAP-PPP}}$ above contains $O(|D||K||E|^2)$ binary variables and $O(|D||K||E|^2)$ constraints, making it very challenging to solve directly.

\section{IP Relaxations}
\label{sec:IP:relax}
In this section, we derive several relaxations of the RWAP-PPP, which can be formulated as IP models. First, by keeping only variables $x$ and $w$, as well as constraints on them, we can obtain from model $\mathrm{IP_{RWAP-PPP}}$ a relaxation, which is the IP formulation of the RWAP, and is thus referred to as model $\mathrm{IP_{RWAP}}$.
\begin{align*}
\mbox{($\mathrm{IP_{RWAP}}$)}  \quad & \min \sum_{\kappa\in K}\sum_{e\in E}w^{\kappa,e} \\
\mbox{s.t.}~ 
& \sum_{\kappa\in K}\sum_{a \in A^+(s_d)}x^{\kappa,a}_{d} = 1, \mbox{ $\forall d\in D$,} \\ 
& \sum_{\kappa\in K}\sum_{a \in A^-(s_d)}x^{\kappa,a}_{d} = 0, \mbox{ $\forall d\in D$,} \\
& \sum_{a \in A^-(v)}x^{\kappa,a}_{d} = \sum_{a \in A^+(v)}x^{\kappa,a}_{d}, \mbox{ $\forall d\in D, ~\kappa\in K,~v\in V\setminus\{s_d,t_d\}$}, \\
& \sum_{d\in D}(x^{\kappa,\alpha_1(e)}_{d}+x^{\kappa,\alpha_2(e)}_{d})  \leq w^{\kappa,e}, \mbox{ $\forall \kappa\in K,~e\in E$}, \\
& x^{\kappa,a}_{d}\in \{0,1\},\mbox{ $\forall
d\in D,~\kappa\in K,a\in A$}, \\
& w^{\kappa,e}\in \{0,1\}, \mbox{ $\forall
\kappa\in K,~e\in E$}.
\end{align*}
Model $\mathrm{IP_{RWAP}}$ has only $O(|D||K||E|)$ binary variables and $O(|D||K||V|+|K||E|)$ constraints, which are much fewer than those of model $\mathrm{IP_{RWAP-PPP}}$.

Next, notice that in model $\mathrm{IP_{RWAP-PPP}}$, there are $O(|E|^2|D||K|)$ constraints in (\ref{eqn:ip:xy1}) and (\ref{eqn:ip:xy2}). By relaxing these constraints, we obtain another IP relaxation of model $\mathrm{IP_{RWAP-PPP}}$:
\begin{align*}
\mbox{($\mathrm{IP_{R1}}$)}  \quad & \min~(\ref{eqn:ip:obj})\\
\mbox{s.t.}~ 
& \mbox{(\ref{eqn:ip:x:s})--(\ref{eqn:ip:def:w2}),~(\ref{eqn:ip:yy})--(\ref{eqn:ip:y}).}
\end{align*}
By further excluding variables $x$ and their related constraints, we obtain another IP relaxation of model $\mathrm{IP_{RWAP-PPP}}$:
\begin{align*}
\mbox{($\mathrm{IP_{R2}}$)}  \quad & \min~(\ref{eqn:ip:obj})\\
\mbox{s.t.}~ 
& \mbox{(\ref{eqn:ip:y:s})--(\ref{eqn:ip:def:w2}),~(\ref{eqn:ip:yy}),~(\ref{eqn:ip:w}),~(\ref{eqn:ip:y}).}
\end{align*}
Both models $\mathrm{IP_{R1}}$ and $\mathrm{IP_{R2}}$ contains $O(|D||K||E|^2)$ variables and $O(|D||K||E||V|+|K||E|^2)$ constraints, much fewer than $\mathrm{IP_{RWAP-PPP}}$. Since $\mathrm{IP_{R2}}$ does not contain variables $x$ and their related constraints, it has $O(|D||K||E|)$ and $O(|D||K||V|+|K||E|)$ fewer variables and constraints, respectively, than $\mathrm{IP_{R1}}$.

Let $z_{\mathrm{IP_{R1}}}$ and $z_{\mathrm{IP_{R2}}}$ denote the optimal objective values of models $\mathrm{IP_{R1}}$ and $\mathrm{IP_{R2}}$, respectively. Lemma~\ref{lemma:ipr12} shows that the two IP relaxations above, $\mathrm{IP_{R1}}$ and $\mathrm{IP_{R2}}$, are equivalent.
\begin{lemma}
\label{lemma:ipr12}
$z_{\mathrm{IP_{R1}}}=z_{\mathrm{IP_{R2}}}$.
\end{lemma}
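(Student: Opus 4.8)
The plan is to prove the equality by establishing the two inequalities $z_{\mathrm{IP_{R2}}}\le z_{\mathrm{IP_{R1}}}$ and $z_{\mathrm{IP_{R1}}}\le z_{\mathrm{IP_{R2}}}$ separately, under the standing assumption $\Pi\neq\varnothing$ that holds for any genuine instance of the RWAP-PPP.

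For the first inequality, I would simply observe that $\mathrm{IP_{R2}}$ is obtained from $\mathrm{IP_{R1}}$ by deleting the variables $x$ together with every constraint that references them—namely the flow-balance constraints (\ref{eqn:ip:x:s})--(\ref{eqn:ip:x:v}), the working no-clash constraints (\ref{eqn:ip:def:w1}), and the integrality (\ref{eqn:ip:x})—while the objective (\ref{eqn:ip:obj}) depends on $w$ alone. Hence, given any optimal solution $(x,y,w)$ of $\mathrm{IP_{R1}}$, its restriction $(y,w)$ already satisfies all constraints of $\mathrm{IP_{R2}}$ and attains the same objective value, so $\mathrm{IP_{R2}}$ is a valid relaxation and $z_{\mathrm{IP_{R2}}}\le z_{\mathrm{IP_{R1}}}$.

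For the reverse (and main) inequality, I would exhibit, from any optimal solution $(y,w)$ of $\mathrm{IP_{R2}}$, a feasible solution of $\mathrm{IP_{R1}}$ with the same objective value. The key structural observation is that, for each fixed failure scenario $\tau\in\Pi$, the backup variables $\{y^{\kappa,a}_{\tau,d}\}$ satisfy a block of constraints—(\ref{eqn:ip:y:s})--(\ref{eqn:ip:y:v}) together with (\ref{eqn:ip:def:w2})—that is formally identical to the block (\ref{eqn:ip:x:s})--(\ref{eqn:ip:def:w1}) governing the working variables $x$. I would therefore fix an arbitrary $\tau^\ast\in\Pi$ and define $x^{\kappa,a}_{d}:=y^{\kappa,a}_{\tau^\ast,d}$ for all $d\in D$, $\kappa\in K$, $a\in A$, leaving $y$ and $w$ unchanged. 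Checking feasibility then reduces to matching constraints one-to-one: constraints (\ref{eqn:ip:x:s}), (\ref{eqn:ip:x:s-}), (\ref{eqn:ip:x:v}), (\ref{eqn:ip:def:w1}), and (\ref{eqn:ip:x}) for this $x$ are exactly (\ref{eqn:ip:y:s}), (\ref{eqn:ip:y:s-}), (\ref{eqn:ip:y:v}), (\ref{eqn:ip:def:w2}), and (\ref{eqn:ip:y}) instantiated at $\tau=\tau^\ast$, which hold by feasibility of $(y,w)$ in $\mathrm{IP_{R2}}$; the remaining $y$- and $w$-constraints of $\mathrm{IP_{R1}}$ are untouched and thus still hold. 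Since $\mathrm{IP_{R1}}$ no longer contains the linking constraints (\ref{eqn:ip:xy1})--(\ref{eqn:ip:xy2}), no compatibility between $x$ and any failure scenario is demanded, so $(x,y,w)$ is feasible; as the objective sees only $w$, it has value $z_{\mathrm{IP_{R2}}}$, giving $z_{\mathrm{IP_{R1}}}\le z_{\mathrm{IP_{R2}}}$.

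The step I expect to be the main obstacle is less the verification—which is routine once the two constraint blocks are aligned—than recognizing that this ``copy one backup scenario into the working layer'' construction is legitimate: it is precisely the absence of the relaxed linking constraints (\ref{eqn:ip:xy1})--(\ref{eqn:ip:xy2}) that decouples $x$ from the failure data and lets a single backup pattern double as a working pattern. I would also flag the degenerate case $\Pi=\varnothing$, in which no $\tau^\ast$ exists to copy and the two models genuinely differ; stating $\Pi\neq\varnothing$ explicitly at the outset removes this gap.
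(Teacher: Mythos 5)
Your proposal is correct and follows essentially the same route as the paper's proof: the paper likewise notes that $\mathrm{IP_{R2}}$ is a relaxation of $\mathrm{IP_{R1}}$ for one direction, and for the other fixes an arbitrary $\tau'\in\Pi$ and sets $\hat{x}^{\kappa,a}_{d}=\hat{y}^{\kappa,a}_{\tau',d}$, matching the $x$-constraints to the $y$-constraints at $\tau=\tau'$. Your explicit remark about the degenerate case $\Pi=\varnothing$ is a small addition the paper leaves implicit, but the core argument is identical.
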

\begin{proof}{Proof}
Since model $\mathrm{IP_{R2}}$ is a relaxation of model $\mathrm{IP_{R1}}$, we obtain that $z_{\mathrm{IP_{R1}}}\geq z_{\mathrm{IP_{R2}}}$. Let $\hat{y}$ and $\hat{w}$ indicate the optimal solution to model $\mathrm{IP_{R2}}$. Let $\tau'$ denote any link in $\Pi$. Define $\hat{x}_{d}^{\kappa,a}=\hat{y}_{\tau',d}^{\kappa,a}$ for $d\in D$, $\kappa\in K$, and $a\in A$. Since $\hat{y}$ and $\hat{w}$ satisfy (\ref{eqn:ip:y:s})--(\ref{eqn:ip:def:w2}) for $\tau=\tau'$, we obtain that $\hat{x}$ and $\hat{w}$ satisfy (\ref{eqn:ip:x:s})--(\ref{eqn:ip:def:w1}). This implies that $\hat{x}$, $\hat{y}$, and $\hat{w}$ form a feasible solution to model $\mathrm{IP_{R1}}$. Thus, we obtain that $z_{\mathrm{IP_{R1}}}\leq z_{\mathrm{IP_{R2}}}$. Hence, $z_{\mathrm{IP_{R1}}}=z_{\mathrm{IP_{R2}}}$, and Lemma~\ref{lemma:ipr12} is proved.
\end{proof}

Let $z_{\mathrm{IP_{RWAP-PPP}}}$ and $z_{\mathrm{IP_{RWAP}}}$ indicate the optimal objective values of models $\mathrm{IP_{RWAP-PPP}}$ and $\mathrm{IP_{RWAP}}$, respectively. From Lemma~\ref{lemma:ipr12}, we can directly obtain Theorem~\ref{thm:IP} below. 
\begin{theorem}
\label{thm:IP}
    $z_{\mathrm{IP_{RWAP-PPP}}} \geq z_{\mathrm{IP_{R1}}}=z_{\mathrm{IP_{R2}}}\geq z_{\mathrm{IP_{RWAP}}}$.
\end{theorem}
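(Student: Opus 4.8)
The plan is to split the chain of relations into its three constituent pieces and treat each separately, leaning on Lemma~\ref{lemma:ipr12} for the middle equality. Since $z_{\mathrm{IP_{R1}}}=z_{\mathrm{IP_{R2}}}$ is already in hand, only the two outer inequalities $z_{\mathrm{IP_{RWAP-PPP}}}\ge z_{\mathrm{IP_{R1}}}$ and $z_{\mathrm{IP_{R2}}}\ge z_{\mathrm{IP_{RWAP}}}$ need to be established, and both follow from elementary relaxation/feasibility arguments rather than new constructions.

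For the left inequality I would invoke the standard ``relaxation lowers the minimum'' principle. Both $\mathrm{IP_{RWAP-PPP}}$ and $\mathrm{IP_{R1}}$ share the same decision variables $(x,y,w)$ and the same objective (\ref{eqn:ip:obj}), and $\mathrm{IP_{R1}}$ is obtained from $\mathrm{IP_{RWAP-PPP}}$ precisely by deleting constraints (\ref{eqn:ip:xy1}) and (\ref{eqn:ip:xy2}). Hence every feasible solution of $\mathrm{IP_{RWAP-PPP}}$ remains feasible for $\mathrm{IP_{R1}}$, so the feasible region only enlarges; as this is a minimization over an unchanged objective, the optimal value cannot increase, yielding $z_{\mathrm{IP_{RWAP-PPP}}}\ge z_{\mathrm{IP_{R1}}}$. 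The only thing to check is that the objective and the variable set are genuinely identical, which is immediate from the model definitions.

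For the right inequality I would reuse the construction already appearing in the proof of Lemma~\ref{lemma:ipr12}. Let $(\hat y,\hat w)$ be an optimal solution of $\mathrm{IP_{R2}}$, fix any $\tau'\in\Pi$, and set $\hat x^{\kappa,a}_d=\hat y^{\kappa,a}_{\tau',d}$ for all $d\in D$, $\kappa\in K$, $a\in A$. The key observation is that the constraints of $\mathrm{IP_{R2}}$ restricted to the single index $\tau=\tau'$, namely (\ref{eqn:ip:y:s})--(\ref{eqn:ip:def:w2}), map under this identification exactly onto the RWAP constraints (\ref{eqn:ip:x:s})--(\ref{eqn:ip:def:w1}), with the same $w$-variables serving as the linking quantities; indeed, the proof of Lemma~\ref{lemma:ipr12} already records that $(\hat x,\hat w)$ satisfies (\ref{eqn:ip:x:s})--(\ref{eqn:ip:def:w1}) and is binary. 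Thus $(\hat x,\hat w)$ is feasible for $\mathrm{IP_{RWAP}}$, and since it keeps the same $\hat w$ its objective value equals $\sum_{\kappa\in K}\sum_{e\in E}\hat w^{\kappa,e}=z_{\mathrm{IP_{R2}}}$. This gives $z_{\mathrm{IP_{RWAP}}}\le z_{\mathrm{IP_{R2}}}$, and combining the three pieces completes the proof.

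The main obstacle, and it is a mild one, lies in the right inequality: one must confirm that every $\tau'$-indexed backup constraint collapses onto the corresponding working-path constraint once $\hat x$ is identified with $\hat y_{\tau'}$. Because this bookkeeping is already carried out in Lemma~\ref{lemma:ipr12}, no fresh computation is needed. The single point worth flagging is that selecting $\tau'\in\Pi$ presupposes $\Pi\neq\emptyset$, which holds for any RWAP-PPP instance with at least one possible link failure and is the same implicit assumption used in the proof of Lemma~\ref{lemma:ipr12}.
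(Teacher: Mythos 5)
Your proposal is correct and follows essentially the same route as the paper, which states that the theorem is obtained directly from Lemma~\ref{lemma:ipr12}: the left inequality is the standard relaxation argument (dropping (\ref{eqn:ip:xy1})--(\ref{eqn:ip:xy2}) enlarges the feasible region), the middle equality is the lemma, and the right inequality reuses the lemma's construction (equivalently, one can note that $\mathrm{IP_{RWAP}}$'s constraints are a subset of $\mathrm{IP_{R1}}$'s, so $z_{\mathrm{IP_{R1}}}\ge z_{\mathrm{IP_{RWAP}}}$, then apply the equality). Your remark about $\Pi\neq\emptyset$ is a fair observation about an assumption the paper also leaves implicit.
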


Moreover, Proposition~\ref{prop:ZIPR2} below indicates that the lower bound provided by $\mathrm{IP_{RWAP}}$ (without partial path protections taken into account) can be arbitrarily worse than that provided by $\mathrm{IP_{R2}}$ (with partial path protections taken into account) in certain extreme situations.
\begin{proposition}
\label{prop:ZIPR2}
Given any constant $L>0$, there always exists an instance of the RWAP-PPP such that the ratio $z_{\mathrm{IP_{R2}}}/z_{\mathrm{IP_{RWAP}}}$ exceeds $L$.
\end{proposition}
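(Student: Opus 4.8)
The plan is to exhibit an explicit family of RWAP-PPP instances, indexed by a size parameter $n$, on which $z_{\mathrm{IP_{RWAP}}}$ stays constant while $z_{\mathrm{IP_{R2}}}$ grows linearly in $n$, so that the ratio $z_{\mathrm{IP_{R2}}}/z_{\mathrm{IP_{RWAP}}}$ can be pushed beyond any prescribed $L$. The driving intuition is that $\mathrm{IP_{RWAP}}$ only has to route a working path, whereas $\mathrm{IP_{R2}}$ must still produce a feasible backup routing for the failure scenario; if the failed link is the only short route for a request, the backup is forced onto a long detour whose edges must all carry a wavelength, inflating the objective $\sum_{\kappa,e}w^{\kappa,e}$.

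Concretely, I would take $G$ to be a single cycle on vertices $v_0,v_1,\dots,v_{n-1}$ with edges $e_i=\{v_i,v_{(i+1)\bmod n}\}$, a single request $d=(v_0,v_1)$, one available wavelength ($|K|=1$ suffices), and the single failure scenario $\Pi=\{e_0\}$ with $e_0=\{v_0,v_1\}$. First I would evaluate $z_{\mathrm{IP_{RWAP}}}$: since $\mathrm{IP_{RWAP}}$ ignores failures, routing $d$ directly along $e_0$ with the one wavelength satisfies (\ref{eqn:ip:x:s})--(\ref{eqn:ip:def:w1}) and yields objective $1$; as any feasible working path of $d$ must use at least one edge, $z_{\mathrm{IP_{RWAP}}}=1$.

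Next I would evaluate $z_{\mathrm{IP_{R2}}}$, which retains the backup-flow constraints (\ref{eqn:ip:y:s})--(\ref{eqn:ip:y:v}), the linking/no-clash constraint (\ref{eqn:ip:def:w2}), and the scenario-exclusion constraint (\ref{eqn:ip:yy}). For $\tau=e_0$, constraints (\ref{eqn:ip:y:s})--(\ref{eqn:ip:y:v}) force the $y$-variables of $d$ to carry one unit of flow from $v_0$ to $v_1$, while (\ref{eqn:ip:yy}) forbids using $e_0$. In the cycle, deleting $e_0$ leaves the unique $v_0$--$v_1$ path $v_0,v_{n-1},\dots,v_1$, which traverses all $n-1$ remaining edges; hence every such edge must carry the backup wavelength, and (\ref{eqn:ip:def:w2}) forces $w^{\kappa,e_i}=1$ for $i=1,\dots,n-1$, giving objective at least $n-1$. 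Assigning exactly these $n-1$ wavelength--edge pairs is feasible, so $z_{\mathrm{IP_{R2}}}=n-1$.

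Combining the two evaluations gives $z_{\mathrm{IP_{R2}}}/z_{\mathrm{IP_{RWAP}}}=n-1$, which exceeds any given $L$ once $n>L+1$, proving the claim (and, via Lemma~\ref{lemma:ipr12}, the analogous statement for $\mathrm{IP_{R1}}$). I expect the only genuinely delicate step to be the lower bound $z_{\mathrm{IP_{R2}}}\ge n-1$: I must argue that the integral backup flow cannot avoid saturating all $n-1$ non-failed edges. This is immediate in this instance because $G\setminus\{e_0\}$ is a simple path, so the unit $v_0$--$v_1$ flow has no alternative support; the remaining bookkeeping (feasibility of the matching upper bound and the trivial RWAP value) is routine.
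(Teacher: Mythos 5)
Your proof is correct and follows essentially the same approach as the paper: both exhibit a cycle instance in which the failure of the direct origin--destination link forces the backup flow onto the unique long detour, so that $z_{\mathrm{IP_{R2}}}$ scales with the cycle length while $z_{\mathrm{IP_{RWAP}}}$ does not. Your variant is a mild simplification (one request, one wavelength, and a single failure scenario $\Pi=\{e_0\}$ instead of $n$ parallel requests with $\Pi=E$), but the mechanism and the parameterization by cycle length are the same.
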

\begin{proof}{Proof}
\begin{figure}[t]
    \centering
    \caption{An instance on a cycle of $m$ nodes for the proof of Proposition~\ref{prop:ZIPR2}.}
    \vspace{6pt}
    \scalebox{0.8}{	\includegraphics{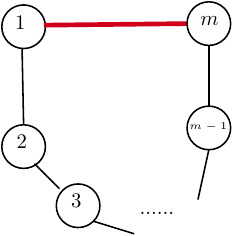}}
    \label{fig:prop1:eg}
\end{figure}
Consider an instance of the RWAP-PPP shown in Figure~\ref{fig:prop1:eg}, where the underlying network is a cycle consisting of $m\geq 3$ nodes, and where $|K|\geq n$. The $n$ communication requests are all from origin node 1 to destination node $m$. All optical links are possible to fail.

For the RWAP, it is optimal to satisfy these communication requests via link $\{1,m\}$ using $n$ wavelengths, implying that the optimal objective value of model $\mathrm{IP_{RWAP}}$ equals $n$, i.e., $z_{\mathrm{IP_{RWAP}}}=n$. 

For model $\mathrm{IP_{R2}}$, it can be simplified as follows:
\begin{align}
    \min \quad & y_{1m} + (m-1)y'_{1m}\\
    \mbox{s.t.}~
    & y'_{1m} = n, \label{eqn:ipr2:simple:1}\\
    & y_{1m}  = n, \label{eqn:ipr2:simple:2}\\
    & y_{1m}, y'_{1m} \in \mathbb{Z}^{+},
\end{align}
where $y'_{1m}$ indicates the number of requests satisfied by path $(1,2,\cdots,m-1,m)$ when link $\{1,m\}$ has failed, and $y_{1m}$ indicates the number of requests satisfied by paths $(1,m)$ when any one of the links in $\{\{1,2\},\{2,3\},\cdots,\{m-1,m\}\}$ has failed. When link $\{1,m\}$ has failed, each request can be satisfied only by path $(1,2,\cdots,m-1,m)$. When any one of the links in $\{\{1,2\},\{2,3\},\cdots,\{m-1,m\}\}$ has failed, each request can be satisfied only by path $(1,m)$. Thus, $y_{1m}  = y'_{1m} = n$ must be satisfied, leading to constraints (\ref{eqn:ipr2:simple:1}) and (\ref{eqn:ipr2:simple:2}). Therefore, the objective value of the above model equals a constant $n+(m-1)n=mn$. This implies that $z_{\mathrm{LP_{R2}}}=mn$.

Therefore, for any given constant $L>0$, by setting $m=\lceil L\rceil +2\geq 3$ in the instance above, we can ensure that $z_{\mathrm{IP_{R2}}}=mn=(\lceil L\rceil +2)n$ and $z_{\mathrm{IP_{RWAP}}}=n$, so that the ratio $z_{\mathrm{IP_{R2}}}/z_{\mathrm{IP_{RWAP}}}=\lceil L\rceil +2$, which exceeds $L$. This completes the proof of Proposition~\ref{prop:ZIPR2}.
\end{proof}

\section{LP Relaxations}
\label{sec:LP:relax}
Let $\mathrm{LP_{RWAP-PPP}}$, $\mathrm{LP_{R1}}$, $\mathrm{LP_{R2}}$, and $\mathrm{LP_{RWAP}}$ indicate the LP relaxations of models $\mathrm{IP_{RWAP-PPP}}$, $\mathrm{IP_{R1}}$, and $\mathrm{IP_{R2}}$, and $\mathrm{IP_{RWAP}}$, respectively. The optimal objective values of these four LP relaxations, denoted by $z_{\mathrm{LP_{RWAP-PPP}}}$, $z_{\mathrm{LP_{R1}}}$, $z_{\mathrm{LP_{R2}}}$, and $z_{\mathrm{LP_{RWAP}}}$, respectively, can all provide lower bounds on the optimal objective value of model $\mathrm{IP_{RWAP-PPP}}$. 

From the IP formulations in Section~\ref{sec:IP:relax}, we know that the direct LP relaxation $\mathrm{LP_{RWAP-PPP}}$ of the RWAP-PPP contains $O(|D||K||E|^2)$ variables and $O(|D||K||E|^2)$ constraints, making it very challenging to solve directly. Moreover, both LP relaxations $\mathrm{LP_{R1}}$ and $\mathrm{LP_{R2}}$ contain $O(|D||K||E|^2)$ variables and $O(|D||K||E||V|+|K||E|^2)$ constraints, much fewer than $\mathrm{LP_{RWAP-PPP}}$. However, LP relaxation $\mathrm{LP_{R2}}$ does not contain variables $x$ and their related constraints, and thus it has $O(|D||K||E|)$ and $O(|D||K||V|+|K||E|)$ fewer variables and constraints, respectively, than $\mathrm{LP_{R1}}$.

By following an analysis similar to that for the establishment of Theorem~\ref{thm:IP}, we can also establish Theorem~\ref{thm:LP} below, implying that $\mathrm{LP_{R2}}$ is equivalent to $\mathrm{LP_{R1}}$ and tighter than $\mathrm{LP_{RWAP}}$.  
\begin{theorem}
\label{thm:LP}
    $z_{\mathrm{IP_{RWAP-PPP}}}\geq z_{\mathrm{LP_{RWAP-PPP}}} \geq z_{\mathrm{LP_{R1}}}=z_{\mathrm{LP_{R2}}}\geq z_{\mathrm{LP_{RWAP}}}$.
\end{theorem}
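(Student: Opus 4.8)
The plan is to establish the displayed chain one inequality at a time, mirroring the argument behind Theorem~\ref{thm:IP} but with every $\{0,1\}$ constraint replaced by its bound-constrained continuous counterpart. Reading left to right, the four links are: (i) $z_{\mathrm{IP_{RWAP-PPP}}}\geq z_{\mathrm{LP_{RWAP-PPP}}}$, the generic fact that dropping integrality in a minimization problem cannot raise the optimum; (ii) $z_{\mathrm{LP_{RWAP-PPP}}}\geq z_{\mathrm{LP_{R1}}}$, which holds because $\mathrm{LP_{R1}}$ is obtained from $\mathrm{LP_{RWAP-PPP}}$ by deleting the coupling constraints (\ref{eqn:ip:xy1}) and (\ref{eqn:ip:xy2}), thereby enlarging the feasible region; (iii) the equality $z_{\mathrm{LP_{R1}}}=z_{\mathrm{LP_{R2}}}$; and (iv) $z_{\mathrm{LP_{R1}}}\geq z_{\mathrm{LP_{RWAP}}}$. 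Links (i) and (ii) are immediate, so I would dispatch each in a single line.

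For link (iv) I would argue by projection. The objective (\ref{eqn:ip:obj}) depends only on $w$, and every constraint of $\mathrm{LP_{RWAP}}$, namely (\ref{eqn:ip:x:s})--(\ref{eqn:ip:def:w1}) together with the bounds on $x$ and $w$, also appears in $\mathrm{LP_{R1}}$. Hence, given any feasible $(x,y,w)$ of $\mathrm{LP_{R1}}$, the truncated pair $(x,w)$ is feasible for $\mathrm{LP_{RWAP}}$ and attains the same objective value; applying this to an optimal solution of $\mathrm{LP_{R1}}$ yields $z_{\mathrm{LP_{R1}}}\geq z_{\mathrm{LP_{RWAP}}}$.

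The only link carrying real content is (iii), the continuous analogue of Lemma~\ref{lemma:ipr12}, and I would reproduce its proof up to relabeling the variable domains. Since $\mathrm{LP_{R2}}$ is $\mathrm{LP_{R1}}$ with the entire $x$-block removed, the direction $z_{\mathrm{LP_{R1}}}\geq z_{\mathrm{LP_{R2}}}$ is again a relaxation inequality. For the reverse, I would take an optimal $(\hat y,\hat w)$ of $\mathrm{LP_{R2}}$, fix any failure link $\tau'\in\Pi$, and set $\hat x^{\kappa,a}_d=\hat y^{\kappa,a}_{\tau',d}$ for all $d,\kappa,a$. The verification to carry out is that $(\hat x,\hat y,\hat w)$ is feasible for $\mathrm{LP_{R1}}$: the flow constraints (\ref{eqn:ip:x:s})--(\ref{eqn:ip:x:v}) for $\hat x$ are literal copies of the scenario-$\tau'$ constraints (\ref{eqn:ip:y:s})--(\ref{eqn:ip:y:v}) satisfied by $\hat y$, the capacity constraint (\ref{eqn:ip:def:w1}) is the scenario-$\tau'$ copy of (\ref{eqn:ip:def:w2}), and the bounds $0\le \hat x\le 1$ are inherited from those on $\hat y$. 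Because these are all linear relations, no appeal to integrality is needed, which is exactly why the integer argument transfers unchanged, giving $z_{\mathrm{LP_{R1}}}\leq z_{\mathrm{LP_{R2}}}$ and hence equality.

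The step I expect to require the most care is (iii), specifically confirming that $\hat x$ and $\hat y$ remain coupled only through the shared $w$ in $\mathrm{LP_{R1}}$. This decoupling is what legitimizes the copying construction: once (\ref{eqn:ip:xy1})--(\ref{eqn:ip:xy2}) have been relaxed in forming $\mathrm{R1}$, the variables $x$ are constrained solely by their own flow equations and by (\ref{eqn:ip:def:w1}) against the common $w$, so importing one scenario's backup flow as the working flow can never violate a constraint linking $x$ to any particular $y^{\kappa,a}_{\tau,d}$. I would flag this decoupling explicitly before performing the construction, since it is the sole structural fact that distinguishes the proof from a one-line relaxation argument.
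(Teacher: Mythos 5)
Your proof is correct and follows essentially the same route as the paper, which establishes Theorem~\ref{thm:LP} by "an analysis similar to that for the establishment of Theorem~\ref{thm:IP}": relaxation inequalities for the first two links and the last, plus the scenario-copying construction of Lemma~\ref{lemma:ipr12} (setting $\hat x=\hat y_{\tau'}$), which transfers verbatim to the continuous setting since it never uses integrality. Your explicit remark that $x$ and $y$ are coupled only through the shared $w$ once (\ref{eqn:ip:xy1})--(\ref{eqn:ip:xy2}) are dropped is exactly the structural fact the paper's argument relies on.
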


Proposition~\ref{proposition:zlpr2} below indicates that the lower bound provided by $\mathrm{LP_{RWAP}}$ (without partial path protections taken into account) can be arbitrarily worse than that provided by $\mathrm{LP_{R2}}$ (with partial path protections taken into account) in certain extreme situations.
\begin{proposition}
\label{proposition:zlpr2}
Given any constant $L>0$, there always exists an instance of the RWAP-PPP such that the ratio $z_{\mathrm{LP_{R2}}}/z_{\mathrm{LP_{RWAP}}}$ exceeds $L$.
\end{proposition}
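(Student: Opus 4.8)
The plan is to reuse the very same family of instances constructed in the proof of Proposition~\ref{prop:ZIPR2}: a cycle on $m \ge 3$ nodes with $|K| \ge n$ wavelengths and $n$ requests, all from node $1$ to node $m$, with every link allowed to fail (Figure~\ref{fig:prop1:eg}). For the integer versions we already know from Proposition~\ref{prop:ZIPR2} that $z_{\mathrm{IP_{RWAP}}} = n$ and $z_{\mathrm{IP_{R2}}} = mn$. The goal is to show the two LP relaxations attain exactly the same values, i.e. $z_{\mathrm{LP_{RWAP}}} = n$ and $z_{\mathrm{LP_{R2}}} = mn$, so that the ratio equals $m$ and choosing $m = \lceil L\rceil + 2$ finishes the proof. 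The governing idea is that on a cycle the routing carries no fractional freedom once a failure scenario is fixed, so the LP cannot improve on the IP.

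First I would pin down $z_{\mathrm{LP_{RWAP}}} = n$. The upper bound is immediate: the integral optimum of $\mathrm{IP_{RWAP}}$ (route all $n$ requests directly on edge $\{1,m\}$ using $n$ distinct wavelengths) is feasible for $\mathrm{LP_{RWAP}}$, so $z_{\mathrm{LP_{RWAP}}} \le n$. For the matching lower bound, note that node $1$ is incident only to the two edges $\{1,2\}$ and $\{1,m\}$. Summing the origin constraints (\ref{eqn:ip:x:s}) over all $d$ puts a total of $n$ units of flow leaving node $1$, and summing the coupling constraints (\ref{eqn:ip:def:w1}) over $\kappa$ on these two edges yields $\sum_{\kappa}(w^{\kappa,\{1,2\}} + w^{\kappa,\{1,m\}}) \ge n$. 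Hence $z_{\mathrm{LP_{RWAP}}} \ge n$, and equality follows.

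Next I would establish $z_{\mathrm{LP_{R2}}} = mn$. The upper bound is free: since $\mathrm{LP_{R2}}$ is the LP relaxation of $\mathrm{IP_{R2}}$, we have $z_{\mathrm{LP_{R2}}} \le z_{\mathrm{IP_{R2}}} = mn$. The lower bound is the crux. Fix any feasible $(y,w)$ and examine the failure scenarios one edge at a time. In scenario $\tau = \{1,m\}$, constraint (\ref{eqn:ip:yy}) forbids any backup flow on $\{1,m\}$, so the conservation constraints (\ref{eqn:ip:y:s})--(\ref{eqn:ip:y:v}) force each request's unit of flow onto the unique surviving path $(1,2,\dots,m)$; thus each of its $m-1$ edges $e$ carries total backup flow $n$, and summing (\ref{eqn:ip:def:w2}) over $\kappa$ gives $\sum_\kappa w^{\kappa,e} \ge n$. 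Symmetrically, in scenario $\tau = \{1,2\}$ the only surviving $1$-to-$m$ path is the single edge $\{1,m\}$, forcing $\sum_\kappa w^{\kappa,\{1,m\}} \ge n$. Adding these $m$ edge-wise inequalities gives $\sum_{\kappa,e} w^{\kappa,e} \ge (m-1)n + n = mn$, so $z_{\mathrm{LP_{R2}}} \ge mn$ and equality holds.

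The main obstacle is precisely this last lower bound: unlike the integer argument in Proposition~\ref{prop:ZIPR2}, we may not invoke integrality and must rule out any gain from fractionally splitting requests across wavelengths or directions. What rescues the argument is the structural fact that deleting a single edge from a cycle leaves a unique simple path between its endpoints; combined with the failed-edge constraint (\ref{eqn:ip:yy}) and flow conservation, this fixes the per-scenario routing entirely, so the aggregated edge flow equals $n$ no matter how the wavelengths are distributed, and the shared variables $w^{\kappa,e}$ inherit the bound through (\ref{eqn:ip:def:w2}). Combining the two evaluations gives $z_{\mathrm{LP_{R2}}}/z_{\mathrm{LP_{RWAP}}} = mn/n = m$, and setting $m = \lceil L\rceil + 2 \ge 3$ makes this ratio exceed $L$. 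In fact, for the ratio it suffices to combine the lower bound $z_{\mathrm{LP_{R2}}} \ge mn$ with the upper bound $z_{\mathrm{LP_{RWAP}}} \le n$, so only these two estimates are indispensable, and the proof of Proposition~\ref{proposition:zlpr2} is complete.
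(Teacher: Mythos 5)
Your proposal is correct and follows essentially the same route as the paper: it reuses the cycle instance from Proposition~\ref{prop:ZIPR2} and establishes $z_{\mathrm{LP_{RWAP}}}=n$ and $z_{\mathrm{LP_{R2}}}=mn$, then sets $m=\lceil L\rceil+2$. The only difference is presentational — where the paper evaluates small ``simplified'' path-based LPs, you justify the same values directly on the arc-based formulations via flow conservation and the uniqueness of the surviving path in a cycle minus an edge, which makes explicit the reduction the paper asserts.
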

\begin{proof}{Proof}
Let us still consider the instance of the RWAP-PPP shown in Figure~\ref{fig:prop1:eg} and used in the proof of Proposition~\ref{prop:ZIPR2}, where all links are possible to fail. For this instance, we can show that the optimal objectives $z_{\mathrm{LP_{R2}}}$ and $z_{\mathrm{LP_{RWAP}}}$ of the LP relaxations of models $\mathrm{IP_{R2}}$ and $\mathrm{IP_{RWAP}}$ are equal to the optimal objective values of models $\mathrm{IP_{R2}}$ and $\mathrm{IP_{RWAP}}$, respectively. More specifically, we can show that $z_{\mathrm{LP_{R2}}}=z_{\mathrm{IP_{R2}}}=mn$ and $z_{\mathrm{LP_{RWAP}}}=z_{\mathrm{IP_{RWAP}}}=n$.

First, to show $z_{\mathrm{LP_{RWAP}}}=z_{\mathrm{IP_{RWAP}}}=n$, we simplify model $\mathrm{LP_{RWAP}}$, the LP relaxation of model $\mathrm{IP_{RWAP}}$, as follows:
\begin{align*}
    \min \quad & x_{1m} + (m-1) x'_{1m}\\
    \mbox{s.t.}~ & x_{1m} + x'_{1m} = n,\\
    & x_{1m}, x'_{1m} \in \mathbb{R}^{+},
\end{align*}
where $x_{1m}$ and $x'_{1m}$ indicate the numbers of requests satisfied by paths $(1,m)$ and $(1,2,\cdots,m-1,m)$, respectively. Since $m\geq 3$, which implies that $m-1>1$, the optimal objective value of this LP relaxation must equal $n$, i.e., $z_{\mathrm{LP_{RWAP}}}=n$ (with $x_{1m} = n$ and $x'_{1m}=0$). As explained in the proof of Proposition~\ref{prop:ZIPR2}, we have $z_{\mathrm{IP_{RWAP}}}=n$, implying that $z_{\mathrm{LP_{RWAP}}}=z_{\mathrm{IP_{RWAP}}}=n$.

Next, to show $z_{\mathrm{LP_{R2}}}=z_{\mathrm{IP_{R2}}}=mn$, we consider a simplified model of model $\mathrm{LP_{R2}}$ as follows, which can be derived directly from the simplified model of $\mathrm{IP_{R2}}$ proposed in the proof of Proposition~\ref{prop:ZIPR2}:
\begin{align*}
    \min \quad  & y_{1m} + (m-1)y'_{1m}\\
    \mbox{s.t.}~ 
    & y'_{1m} = n,\\
    & y_{1m}  = n,\\
    & y_{1m}, y'_{1m} \in \mathbb{R}^{+}.
\end{align*}
We can see that the objective value of this LP relaxation equals constant $mn$, which is the same as the objective value of model $\mathrm{IP_{R2}}$ shown in the proof of Proposition~\ref{prop:ZIPR2}. Thus, $z_{\mathrm{LP_{R2}}}=z_{\mathrm{IP_{R2}}}=mn$.

Therefore, similar to the proof of Proposition~\ref{prop:ZIPR2}, for any given constant $L>0$, by setting $m=\lceil L\rceil +2\geq 3$ in the instance above, we can ensure that $z_{\mathrm{LP_{R2}}}=mn=(\lceil L\rceil +2)n$ and $z_{\mathrm{LP_{RWAP}}}=n$, so that the ratio $z_{\mathrm{LP_{R2}}}/z_{\mathrm{LP_{RWAP}}}=\lceil L\rceil +2$, which exceeds $L$. This completes the proof of Proposition~\ref{proposition:zlpr2}.
\end{proof}

Next, let $q_{st}$ indicate the number of requests with their origin at node $s$ and destination at node~$t$ for $s\in V$ and $t\in V$. Consider a new LP relaxation $\mathrm{LP_{R3}}$ of the RWAP-PPP shown below, which relaxes model $\mathrm{LP_{R2}}$ with variables and constraints aggregated by wavelengths and by the origins of communication requests:
\begin{align}
\mbox{($\mathrm{LP_{R3}}$)} \quad & \min \sum_{e\in E}\bar{w}^{e} \label{eqn:lpr3:obj}\\
\mbox{s.t.}~ 
& \sum_{a \in A^+(s)}y^{a}_{\tau,s} = \sum_{t\in T}q_{st}, \mbox{ $\forall \tau\in \Pi,~\forall s\in V$}, \label{eqn:lpr3:y:s}\\
& \sum_{a \in A^-(s)}y^{a}_{\tau,s} = 0, \mbox{ $\forall \tau\in \Pi,~\forall s\in V$}, \label{eqn:lpr3:y:s-}\\
& \sum_{a \in A^-(v)}y^{a}_{\tau,s} = \sum_{a \in A^+(v)}y^{a}_{\tau,s}+q_{sv}, \mbox{ $\forall \tau\in \Pi,~s\in V,~v\in V\setminus\{s\}$}, \label{eqn:lpr3:y:v}\\
&  \sum_{s\in V}(y^{\alpha_1(e)}_{\tau,s} + y^{\alpha_2(e)}_{\tau,s}) \leq \bar{w}^{e}, \mbox{ $\forall \tau\in \Pi,~e\in E$}, \label{eqn:lpr3:def:w2}\\
& y^{\alpha_1(\tau)}_{\tau,s}+y^{\alpha_2(\tau)}_{\tau,s}=0, \mbox{ $\forall \tau\in \Pi,~s\in V$},\label{eqn:lpr3:yy}\\
& 0\leq \bar{w}^{e}\leq |K|, \mbox{ $\forall
e\in E$}, \label{eqn:lpr3:w}\\
& 0\leq y^{a}_{\tau,s}\leq \sum_{t\in T}q_{st}, \mbox{ $\forall
\tau\in \Pi,~s\in V,~a\in A$}. \label{eqn:lpr3:y}
\end{align}
In $\mathrm{LP_{R3}}$, each variable $\bar{w}^{e}$ for $e\in E$ indicates the sum of variables $w^{\kappa,e}$ of $\mathrm{LP_{R2}}$ for all $\kappa\in K$, and each variable $y^{a}_{\tau,s}$ for $\tau\in \Pi$, $s\in V$, and $a\in A$ indicates the sum of variables $y^{\kappa,a}_{\tau,d}$ for all $\kappa\in K$ and $d\in D$ with $s_d=s$. Since $|E|$ and $|A|$ are both in $O(|E|)$, it can be seen that 
$\mathrm{LP_{R3}}$ contains only $O(|E|^2|V|)$ variables and $O(|E||V|^2)$ constraints, much fewer than $\mathrm{LP_{R2}}$, which, as explained above, has $O(|D||K||E|^2)$ variables and $O(|D||K||E||V|+|K||E|)$ constraints.

Moreover, let $z_{\mathrm{LP_{R3}}}$ indicate the optimal objective value of $\mathrm{LP_{R3}}$. Proposition~\ref{prop:R3-R2} below shows that although the number of variables and constraints are significantly reduced, the new LP relaxation $\mathrm{LP_{R3}}$ is equivalent to $\mathrm{LP_{R2}}$.
\begin{proposition}
\label{prop:R3-R2}
$z_{\mathrm{LP_{R3}}}=z_{\mathrm{LP_{R2}}}$.
\end{proposition}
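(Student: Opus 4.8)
The plan is to establish $z_{\mathrm{LP_{R3}}}=z_{\mathrm{LP_{R2}}}$ by proving the two inequalities separately, each through an explicit construction that maps a feasible solution of one model to a feasible solution of the other while preserving the objective value. Since $\mathrm{LP_{R3}}$ is obtained from $\mathrm{LP_{R2}}$ by summing the $w$-variables over all $\kappa\in K$ and summing the $y$-variables over all $\kappa\in K$ and over all requests sharing a common origin, the natural strategy is: (i) for $z_{\mathrm{LP_{R3}}}\le z_{\mathrm{LP_{R2}}}$, aggregate an optimal solution of $\mathrm{LP_{R2}}$; and (ii) for $z_{\mathrm{LP_{R2}}}\le z_{\mathrm{LP_{R3}}}$, disaggregate an optimal solution of $\mathrm{LP_{R3}}$. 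The underlying principle is that routing a single-source flow and then splitting it by destination is lossless in the fractional setting, so collapsing all requests with a common origin into one commodity does not change the optimal value.

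For the direction $z_{\mathrm{LP_{R3}}}\le z_{\mathrm{LP_{R2}}}$, I would take an optimal solution $(y^{\kappa,a}_{\tau,d},w^{\kappa,e})$ of $\mathrm{LP_{R2}}$ and set $\bar w^{e}=\sum_{\kappa\in K}w^{\kappa,e}$ and $y^{a}_{\tau,s}=\sum_{\kappa\in K}\sum_{d:\,s_d=s}y^{\kappa,a}_{\tau,d}$. Summing constraints (\ref{eqn:ip:y:s}) and (\ref{eqn:ip:y:s-}) over all $d$ with $s_d=s$ yields (\ref{eqn:lpr3:y:s}) and (\ref{eqn:lpr3:y:s-}); summing the per-request, per-wavelength balance (\ref{eqn:ip:y:v}) over the same requests and over $\kappa$, and accounting for the unit of net inflow created at each request's destination, yields the demand-balanced constraint (\ref{eqn:lpr3:y:v}) with right-hand side $q_{sv}$; summing (\ref{eqn:ip:def:w2}) and (\ref{eqn:ip:yy}) over $\kappa$ and over requests yields (\ref{eqn:lpr3:def:w2}) and (\ref{eqn:lpr3:yy}). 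The bound $\bar w^{e}\le|K|$ follows from $w^{\kappa,e}\le1$. The only delicate point is the upper bound $y^{a}_{\tau,s}\le\sum_{t}q_{st}$ in (\ref{eqn:lpr3:y}): the aggregated flow may exceed $\sum_{t}q_{st}$ on an arc if the original solution carries flow on cycles. I would cancel such cycles in each flow $y^{\cdot}_{\tau,s}$, which only decreases arc values while preserving the net demands $q_{sv}$ and keeping (\ref{eqn:lpr3:def:w2}) valid under the unchanged $\bar w$; a cycle-free single-source flow then has arc value at most its total out-flow $\sum_{t}q_{st}$. The objective $\sum_e\bar w^e$ equals the $\mathrm{LP_{R2}}$ optimum, giving the inequality.

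For the harder direction $z_{\mathrm{LP_{R2}}}\le z_{\mathrm{LP_{R3}}}$, I would take an optimal solution $(y^{a}_{\tau,s},\bar w^{e})$ of $\mathrm{LP_{R3}}$ and reverse the aggregation. For each $\tau$ and $s$, the flow $y^{\cdot}_{\tau,s}$ is a single-source flow out of $s$ with net demand $q_{sv}$ at every $v$. Invoking the flow-decomposition theorem, I would decompose it into $s$-to-$v$ path flows, discarding any cycle components (which carry no net demand), so that the surviving path flow delivered to each destination $v$ has total value $q_{sv}$; writing this destination-$v$ sub-flow as $f^{a}_{\tau,s,v}$, I would assign each of the $q_{sv}$ requests $d$ with $(s_d,t_d)=(s,v)$ the unit $s$-$v$ flow $y^{a}_{\tau,d}=f^{a}_{\tau,s,v}/q_{sv}$, and then split uniformly over wavelengths via $y^{\kappa,a}_{\tau,d}=y^{a}_{\tau,d}/|K|$ and $w^{\kappa,e}=\bar w^{e}/|K|$. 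Each $y^{\cdot}_{\tau,d}$ being a unit $s_d$-to-$t_d$ flow gives (\ref{eqn:ip:y:s})--(\ref{eqn:ip:y:v}) after summing over $\kappa$, and the per-wavelength balance holds by scaling; (\ref{eqn:ip:yy}) survives because (\ref{eqn:lpr3:yy}) forces both nonnegative terms to vanish. The capacity check (\ref{eqn:ip:def:w2}) follows because $\sum_{d:\,s_d=s}y^{a}_{\tau,d}=\sum_v f^{a}_{\tau,s,v}\le y^{a}_{\tau,s}$, so $\sum_d(y^{\kappa,\alpha_1(e)}_{\tau,d}+y^{\kappa,\alpha_2(e)}_{\tau,d})\le\bar w^{e}/|K|=w^{\kappa,e}$ by (\ref{eqn:lpr3:def:w2}), while $0\le w^{\kappa,e}\le1$ holds precisely because (\ref{eqn:lpr3:w}) guarantees $\bar w^{e}\le|K|$. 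The resulting objective $\sum_\kappa\sum_e w^{\kappa,e}=\sum_e\bar w^{e}$ equals the $\mathrm{LP_{R3}}$ optimum, completing the inequality and hence the equality.

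The main obstacle is the disaggregation step, i.e.\ recovering a valid per-request routing from the origin-aggregated flow $y^{\cdot}_{\tau,s}$. This is exactly where flow decomposition and the proportional split among requests sharing an origin-destination pair are needed, and it is the step that relies essentially on fractional flows, so that the unit demand of each request can be met by an averaged share of the aggregated routing. The wavelength dimension is comparatively trivial, collapsing under the uniform split once the bound $\bar w^{e}\le|K|$ is available; the only remaining care is the cycle-removal bookkeeping that keeps the arc bounds in (\ref{eqn:lpr3:y}) satisfied.
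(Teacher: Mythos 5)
Your proof is correct and follows essentially the same route as the paper: the easy direction by aggregating an $\mathrm{LP_{R2}}$ solution over wavelengths and over requests with a common origin, and the harder direction by decomposing each origin-aggregated flow $y^{\cdot}_{\tau,s}$ into destination-specific sub-flows and then splitting uniformly over the $q_{s_d,t_d}$ requests and the $|K|$ wavelengths, exactly as in the paper's construction $\tilde{y}^{\kappa,a}_{\tau,d}=\bar{y}^{a}_{\tau,s_d,t_d}/(|K|q_{s_d,t_d})$, $\tilde{w}^{\kappa,e}=\hat{w}^{e}/|K|$. Your explicit cycle-cancellation step in the aggregation direction is a small extra care that the paper elides (it simply asserts that $\mathrm{LP_{R3}}$ is a relaxation), but it does not change the argument in any essential way.
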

\begin{proof}{Proof}
On the one hand, due to the aggregation of variables and constraints, $\mathrm{LP_{R3}}$ is a relaxation of $\mathrm{LP_{R2}}$, which implies that $z_{\mathrm{LP_{R3}}}\leq z_{\mathrm{LP_{R2}}}$. 

On the other hand, let $\hat{y}$ and $\hat{w}$ indicate the optimal solution to $\mathrm{LP_{R3}}$. For each $\tau\in \Pi$ and $s\in V$, we can see from (\ref{eqn:lpr3:y:s})--(\ref{eqn:lpr3:y:v}) that values of variables $\hat{y}_{\tau,s}^{a}$ for $a\in A$ form a network flow that originates from $s$ and that has a quantity of $q_{s,t}$ being absorbed at each node $t\in V\setminus \{s\}$. Thus, for each $s\in V$, there must exist non-negative values for new variable $\bar{y}^{a}_{\tau,s,t}$ for $a\in A$ and $t\in V\setminus \{s\}$, such that
\begin{align}
&\sum_{t\in V\setminus \{s\}}\bar{y}^{a}_{\tau,s,t} = \hat{y}^{a}_{\tau,s}, \label{proof:R2-R2:sum}\\
&\sum_{a\in A^-(v)}\bar{y}^{a}_{\tau,s,t} = \sum_{a\in A^+(v)}\bar{y}^{a}_{\tau,s,t}, \mbox{ for $v\in V\setminus \{s,t\}$}, \label{proof:R3-R2:balance}\\
&\sum_{a\in A^-(t)}\bar{y}^{a}_{\tau,s,t} = q_{s,t}, \label{proof:R2-R2:qst}\\
&\sum_{a\in A^+(t)}\bar{y}^{a}_{\tau,s,t} = 0. \label{proof:R2-R2:0}
\end{align}
Here, each new variable $\bar{y}^{a}_{\tau,s,t}$ represents the flow that originates from node $s$, passes arc $a$, and is absorbed at node $t$. From these above, we can obtain that for $t\in V\setminus \{s\}$, 
\begin{align}
&\sum_{a\in A^+(s)}\bar{y}^{a}_{\tau,s,t} = q_{s,t}, \label{proof:R3-R2:source}\\
&\sum_{a\in A^-(s)}\bar{y}^{a}_{\tau,s,t} = 0. \label{proof:R3-R2:source-}
\end{align}

We next define 
\begin{align*}
\tilde{y}^{\kappa,a}_{\tau,d} = \frac{\bar{y}^{a}_{\tau,s_d,t_d}}{|K|q_{s_d,t_d}} \mbox{ for $\tau\in \Pi,d\in D,\kappa\in K,a\in A$, and } \tilde{w}^{\kappa,e} = \frac{\hat{w}^{e}}{|K|} \mbox{ for $\kappa\in K, e\in E$}.
\end{align*}
We can now verify as follows that $\tilde{y}$ and $\tilde{w}$ defined above satisfy constraints (\ref{eqn:ip:y:s})--(\ref{eqn:ip:def:w2}), and (\ref{eqn:ip:yy}). For each $\tau\in \Pi$, $d\in D$, by (\ref{proof:R3-R2:source}) and (\ref{proof:R3-R2:source-}) we have that
\begin{align*}
\sum_{a \in A^+(s_d)}\sum_{\kappa\in K}\tilde{y}^{\kappa,a}_{\tau,d} = \sum_{a \in A^+(s_d)}\sum_{\kappa\in K} \frac{\bar{y}^{a}_{\tau,s_d,t_d}}{|K|q_{s_d,t_d}} = \frac{\sum_{a \in A^+(s_d)}\bar{y}^{a}_{\tau,s_d,t_d}}{q_{s_d,t_d}} = 1,\\
\sum_{a \in A^-(s_d)}\sum_{\kappa\in K}\tilde{y}^{\kappa,a}_{\tau,d} = \sum_{a \in A^-(s_d)}\sum_{\kappa\in K} \frac{\bar{y}^{a}_{\tau,s_d,t_d}}{|K|q_{s_d,t_d}} = \frac{\sum_{a \in A^-(s_d)}\bar{y}^{a}_{\tau,s_d,t_d}}{q_{s_d,t_d}} = 0,
\end{align*}
implying that (\ref{eqn:ip:y:s}) and (\ref{eqn:ip:y:s-}) are satisfied. 

For each $\tau\in \Pi$, $d\in D$, $\kappa\in K$,  and $v\in V\setminus\{s_d,t_d\}$, from (\ref{proof:R3-R2:balance}) we have that
\begin{align*}
\sum_{a \in A^-(v)}\tilde{y}^{\kappa,a}_{\tau,d} = \sum_{a \in A^-(v)}\frac{\bar{y}^{a}_{\tau,s_d,t_d}}{|K|q_{s_d,t_d}} = \frac{\sum_{a \in A^-(v)}\bar{y}^{a}_{\tau,s_d,t_d}}{|K|q_{s_d,t_d}} = \frac{\sum_{a \in A^+(v)}\bar{y}^{a}_{\tau,s_d,t_d}}{|K|q_{s_d,t_d}} = \sum_{a \in A^+(v)}\tilde{y}^{\kappa,a}_{\tau,d},
\end{align*}
implying that (\ref{eqn:ip:y:v}) is satisfied.

For each $\tau\in \Pi$, $\kappa\in K$, $e\in E$, by (\ref{proof:R2-R2:sum})
 and (\ref{eqn:lpr3:def:w2}) we have that 
\begin{align*}
&\sum_{d\in D}(\tilde{y}^{\kappa,\alpha_1(e)}_{\tau,d} + \tilde{y}^{\kappa,\alpha_2(e)}_{\tau,d})
= \sum_{d\in D}(\frac{\bar{y}^{\alpha_1(e)}_{\tau,s_d,t_d}}{|K|q_{s_d,t_d}} + \frac{\bar{y}^{\alpha_2(e)}_{\tau,s_d,t_d}}{|K|q_{s_d,t_d}})\\
=&\sum_{s\in V}\sum_{t\in V\setminus \{s\}:q_{s,t}>0}(\frac{\bar{y}^{\alpha_1(e)}_{\tau,s,t}\cdot q_{s,t}}{|K|q_{s,t}} + \frac{\bar{y}^{\alpha_2(e)}_{\tau,s,t}\cdot q_{s,t}}{|K|q_{s,t}})\nonumber\\
=& \sum_{s\in V}\sum_{t\in V\setminus \{s\}}(\frac{\bar{y}^{\alpha_1(e)}_{\tau,s,t}}{|K|} + \frac{\bar{y}^{\alpha_2(e)}_{\tau,s,t}}{|K|})=
\sum_{s\in V}(\frac{\hat{y}^{\alpha_1(e)}_{\tau,s}}{|K|} + \frac{\hat{y}^{\alpha_2(e)}_{\tau,s}}{|K|})
\leq \frac{\hat{w}^{e}}{|K|} = \tilde{w}^{\kappa,e},
\end{align*}
implying that (\ref{eqn:ip:def:w2}) is satisfied.

Moreover, for each $\tau\in \Pi$, $d\in D$, and $\kappa\in K$, by (\ref{eqn:lpr3:yy}) we have that
\begin{align*}
\tilde{y}^{\kappa,\alpha_1(\tau)}_{\tau,d} + \tilde{y}^{\kappa,\alpha_2(\tau)}_{\tau,d}= \frac{\bar{y}^{\alpha_1(\tau)}_{\tau,s_d,t_d} + \bar{y}^{\alpha_2(\tau)}_{\tau,s_d,t_d}}{|K|q_{s_d,t_d}} = 0,
\end{align*}
implying that (\ref{eqn:ip:yy}) is satisfied.


Hence, $\tilde{y}$ and $\tilde{w}$ defined above form a feasible solution to model $\mathrm{LP_{R2}}$, implying that $z_{\mathrm{LP_{R3}}}\geq z_{\mathrm{LP_{R2}}}$. This, together with $z_{\mathrm{LP_{R3}}}\leq z_{\mathrm{LP_{R2}}}$, implies that $z_{\mathrm{LP_{R3}}}= z_{\mathrm{LP_{R2}}}$. Proposition~\ref{prop:R3-R2} is proved.
\end{proof}

\section{Solving LP Relaxation $\mathrm{LP_{R3}}$}
\label{sec:IP:relax:solution}
As shown in Section~\ref{sec:LP:relax}, $\mathrm{LP_{R3}}$ is as tight as $\mathrm{LP_{R1}}$ and $\mathrm{LP_{R2}}$, which is much tighter than $\mathrm{LP_{RWAP}}$. Compared with $\mathrm{LP_{R1}}$,  $\mathrm{LP_{R2}}$, and $\mathrm{LP_{RWAP-PPP}}$, $\mathrm{LP_{R3}}$ has significantly fewer variables and constraints. However, for large-sized instances of the RWAP-PPP, the size of $\mathrm{LP_{R3}}$ may still be too large to solve directly.

To solve the LP relaxation $\mathrm{LP_{R3}}$ of the RWAP-PPP, we develop a Benders decomposition algorithm in this section. First, for each $\tau\in \Pi$, given a vector $\bar{w}$ with each of its components $\bar{w}^e$ satisfying $0\leq \bar{w}^e\leq |K|$ for $e\in E$, we define a subproblem as follows, which is referred to as subproblem $F_{\tau}(\bar{w})$, and its optimal objective value is also denoted by $F_{\tau}(\bar{w})$:
\begin{align}
   & F_{\tau}(\bar{w}) = \min \epsilon \\
    \mbox{s.t.}~
& \sum_{a \in A^+(s)}y^{a}_{\tau,s} = \sum_{t\in T}q_{st}, \mbox{ $\forall s\in V$}, \label{eqn:lpr3:F:y:s}\\
& \sum_{a \in A^-(s)}y^{a}_{\tau,s} = 0, \mbox{ $\forall s\in V$}, \label{eqn:lpr3:F:y:s-}\\
& \sum_{a \in A^-(v)}y^{a}_{\tau,s} = \sum_{a \in A^+(v)}y^{a}_{\tau,s}+q_{sv}, \mbox{ $\forall s\in V,~v\in V\setminus\{s\}$}, \label{eqn:lpr3:F:y:v}\\
&  \sum_{s\in V}(y^{\alpha_1(e)}_{\tau,s} + y^{\alpha_2(e)}_{\tau,s}) \leq \bar{w}^{e}+\epsilon, \mbox{ $\forall e\in E$}, \label{eqn:lpr3:F:def:w2}\\
& y^{\alpha_1(\tau)}_{\tau,s}+y^{\alpha_2(\tau)}_{\tau,s}=0, \mbox{ $\forall s\in V$},\label{eqn:lpr3:F:yy}\\
& 0\leq y^{a}_{\tau,s}\leq \sum_{t\in T}q_{st}, \mbox{ $\forall
s\in V,~a\in A$}, \label{eqn:lpr3:F:y}\\
& \epsilon \geq 0.
\end{align}
Constraints (\ref{eqn:lpr3:F:def:w2}) \revxzb{are} relaxations of constraints (\ref{eqn:lpr3:def:w2}), and a new variable, $\epsilon$, is introduced to represent the maximum violation of constraints (\ref{eqn:lpr3:def:w2}). The objective of subproblem $F_{\tau}(\bar{w})$ is to minimize this maximum violation, $\epsilon$. It can be seen that subproblem $F_{\tau}(\bar{w})$ contains only $O(|V||E|)$ variables and $O(|V|^2)$ constraints, much fewer than $\mathrm{LP_{R3}}$.

We can now establish Proposition~\ref{prop:Ftauw} below, which utilizes $F_{\tau}(\bar{w})$ to present a condition on the feasibility of a solution. 
\begin{proposition}
\label{prop:Ftauw}
Given any $\bar{w}$ with $0\leq \bar{w}^e\leq |K|$ for $e\in E$, there exists $y$ such that $(\bar{w},y)$ forms a feasible solution to $\mathrm{LP_{R3}}$, if and only if $F_{\tau}(\bar{w})=0$ for all $\tau\in \Pi$.
\end{proposition}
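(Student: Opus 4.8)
The plan is to exploit the block structure of $\mathrm{LP_{R3}}$ in the variables $y$ once $\bar{w}$ is fixed, and then read off feasibility of each block from the value of the corresponding subproblem. With $\bar{w}$ held fixed and satisfying (\ref{eqn:lpr3:w}) by hypothesis, I would first observe that the remaining constraints (\ref{eqn:lpr3:y:s})--(\ref{eqn:lpr3:y:v}), (\ref{eqn:lpr3:def:w2}), (\ref{eqn:lpr3:yy}) and (\ref{eqn:lpr3:y}), which are precisely the constraints involving $y$, separate completely across $\tau\in\Pi$: for distinct failure links the groups of variables $\{y^a_{\tau,s}\}_{s\in V,\,a\in A}$ appear in disjoint families of constraints, the only quantity shared among different $\tau$ being the fixed vector $\bar{w}$ on the right-hand side of (\ref{eqn:lpr3:def:w2}). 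Consequently, a $y$ with $(\bar{w},y)$ feasible for $\mathrm{LP_{R3}}$ exists if and only if, for every $\tau\in\Pi$, the $\tau$-block, namely the system (\ref{eqn:lpr3:y:s})--(\ref{eqn:lpr3:y:v}), (\ref{eqn:lpr3:def:w2}), (\ref{eqn:lpr3:yy}) and (\ref{eqn:lpr3:y}) restricted to the variables $y_{\tau}=\{y^a_{\tau,s}\}$, admits a feasible point. This reduces the claim to showing, for each fixed $\tau$, that the $\tau$-block is feasible if and only if $F_\tau(\bar{w})=0$.

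Next I would compare the $\tau$-block with the subproblem $F_\tau(\bar{w})$ constraint by constraint. The subproblem is obtained from the $\tau$-block by introducing the scalar $\epsilon\ge 0$, leaving (\ref{eqn:lpr3:F:y:s})--(\ref{eqn:lpr3:F:y:v}), (\ref{eqn:lpr3:F:yy}) and (\ref{eqn:lpr3:F:y}) identical to (\ref{eqn:lpr3:y:s})--(\ref{eqn:lpr3:y:v}), (\ref{eqn:lpr3:yy}) and (\ref{eqn:lpr3:y}), and replacing only the capacity constraints (\ref{eqn:lpr3:def:w2}) by their $\epsilon$-relaxation (\ref{eqn:lpr3:F:def:w2}). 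Since $\epsilon\ge 0$ is imposed, the objective of $F_\tau(\bar{w})$ is bounded below by $0$, so whenever the subproblem is feasible its optimum is attained with $F_\tau(\bar{w})\ge 0$. The crux is the elementary observation that setting $\epsilon=0$ in (\ref{eqn:lpr3:F:def:w2}) recovers exactly (\ref{eqn:lpr3:def:w2}).

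For the forward direction, suppose $(\bar{w},y)$ is feasible for $\mathrm{LP_{R3}}$. Fixing any $\tau\in\Pi$, the components $y_\tau$ together with $\epsilon=0$ satisfy every constraint of $F_\tau(\bar{w})$: the flow-balance, failure and bound constraints carry over verbatim, while (\ref{eqn:lpr3:def:w2}) yields (\ref{eqn:lpr3:F:def:w2}) with $\epsilon=0$. This exhibits a feasible point of objective value $0$, and since the value cannot be negative, $F_\tau(\bar{w})=0$. Conversely, suppose $F_\tau(\bar{w})=0$ for every $\tau\in\Pi$. For each $\tau$ pick an optimal solution $(y_\tau,\epsilon_\tau)$ of $F_\tau(\bar{w})$; optimality at value $0$ forces $\epsilon_\tau=0$, so (\ref{eqn:lpr3:F:def:w2}) collapses to (\ref{eqn:lpr3:def:w2}) and $y_\tau$ satisfies the full $\tau$-block. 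Assembling these blocks over all $\tau\in\Pi$ produces a $y$ for which $(\bar{w},y)$ satisfies (\ref{eqn:lpr3:y:s})--(\ref{eqn:lpr3:y}), i.e., a feasible solution of $\mathrm{LP_{R3}}$, which closes the equivalence.

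I expect the only genuine subtlety, rather than a real obstacle, to be the bookkeeping that justifies the decomposition cleanly together with the degenerate case in which a $\tau$-block is infeasible (for instance when deleting $\tau$ separates some origin from a destination it must serve). In that case $F_\tau(\bar{w})=+\infty\neq 0$ under the usual convention for an infeasible LP, and simultaneously no feasible $y_\tau$ exists, so both sides of the equivalence fail together and the statement is unaffected. The remaining steps are direct term-by-term matchings of constraints and require no nontrivial estimation.
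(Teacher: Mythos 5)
Your proposal is correct and follows essentially the same argument as the paper's proof: both directions rest on the observation that the $y$-constraints of $\mathrm{LP_{R3}}$ decompose by $\tau$ into exactly the subproblem constraints with $\epsilon=0$, so feasibility of $(\bar{w},y)$ is equivalent to each subproblem attaining optimal value $0$. Your additional remarks on the block separability and the infeasible-block convention are careful bookkeeping that the paper leaves implicit, but they do not change the route.
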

\begin{proof}{Proof}
On the one hand, if $(\bar{w},y)$ forms a feasible solution to $\mathrm{LP_{R3}}$, then $y_{\tau,s}^{a}$ for $s\in V$ and $a\in A$ and $\epsilon=0$ form a feasible solution to the subproblem $F_{\tau}(\bar{w})$ for each $\tau\in \Pi$. This implies that $F_{\tau}(\bar{w})=0$ for all $\tau\in \Pi$.

On the other hand, for each $\tau\in \Pi$, if $F_{\tau}(\bar{w})=0$, then there exist $y_{\tau,s}^{a}$ for $s\in V$ and $a\in A$ that, together with $\epsilon=0$, form a feasible solution to the subproblem $F_{\tau}(\bar{w})$. Thus, it can be seen that $(\bar{w},y)$ forms a feasible solution to $\mathrm{LP_{R3}}$. Proposition~\ref{prop:Ftauw} is proved.
\end{proof}

We can see that each subproblem $F_{\tau}(\bar{w})$ defined above is an LP. Its dual problem can be formulated as follows, where link $\tau\in \Pi$, and $\beta$, $\phi$, $\gamma$, $\theta$, $\psi$, and $\zeta$ are dual variables associated with constraints (\ref{eqn:lpr3:F:y:s})--(\ref{eqn:lpr3:F:y}), respectively:
\begin{align*}
\max &\quad  \sum_{s\in V}\sum_{t\in T}q_{st}\beta_{s} + \sum_{s\in V}\sum_{v\in V\setminus \{s\}}q_{sv}\gamma_{sv} + \sum_{e\in E}\bar{w}^{e}\theta_{e} + \sum_{s\in V}\sum_{a\in A}\sum_{t\in T}q_{st}\zeta_{s,a}\\
\mbox{s.t.}~
& \gamma_{s,v}-\gamma_{s,u}+\theta_{e}+\zeta_{s,a} \leq 0, \mbox{ $\forall s\in V, e\in E\setminus \{\tau\}, a=( u,v) \in \{\alpha_1(e),\alpha_2(e)\}$ with $u\neq s, v\neq s$},\\
& \gamma_{s,v}-\gamma_{s,u}+\theta_{e}+\psi_s+\zeta_{s,a} \leq 0, \mbox{ $\forall s\in V, a=( u,v) \in \{\alpha_1(\tau),\alpha_2(\tau)\}$ with $u\neq s, v\neq s$},\\
& \beta_{s} +\gamma_{s,v}+\theta_{e}+\zeta_{s,a} \leq 0, \mbox{ $\forall s\in V, e\in E\setminus \{\tau\},a=( s,v) \in \{\alpha_1(e),\alpha_2(e)\}$ with $v\neq s$},\\
& \beta_{s} +\gamma_{s,v}+\theta_{e}+\psi_{s}+\zeta_{s,a} \leq 0, \mbox{ $\forall s\in V, a=( s,v) \in \{\alpha_1(\tau),\alpha_2(\tau)\}$ with $v\neq s$},\\
& \phi_s -\gamma_{v,s}+\theta_{e}+\zeta_{s,a} \leq 0, \mbox{ $\forall s\in V, e\in E\setminus \{\tau\},a=( v,s) \in \{\alpha_1(e),\alpha_2(e)\}$ with $v\neq s$},\\
& \phi_s-\gamma_{v,s}+\theta_{e}+\psi_{s}+\zeta_{s,a} \leq 0, \mbox{ $\forall s\in V, a=( v,s) \in \{\alpha_1(\tau),\alpha_2(\tau)\}$ with $v\neq s$},\\
& -\sum_{e\in E}\theta_{e}\leq 1, \\
& \theta_{e}\leq 0, \mbox{ $\forall e\in E$},\\
& \zeta_{s,a}\leq 0, \mbox{ $\forall s\in V,~a\in A$}.
\end{align*}
\normalsize
Let $\Gamma_{\tau}$ indicate the domain of the above dual problem of the subproblem $F_{\tau}(\bar{w})$. By strong duality, we know that $F_{\tau}(\bar{w})=0$ if and only if $ \sum_{s\in V}\sum_{t\in T}q_{st}\beta_{s} + \sum_{s\in V}\sum_{v\in V\setminus \{s\}}q_{sv}\gamma_{sv} + \sum_{e\in E}\bar{w}^{e}\theta_{e} + \sum_{s\in V}\sum_{a\in A}\sum_{t\in T}q_{st}\zeta_{s,a}= 0$ for all $(\beta,\phi,\gamma,\theta,\psi,\zeta)\in \Gamma_{\tau}$.

Let $\tau_0$ denote any link in $\Pi$. By Proposition~\ref{prop:Ftauw}, we can reformulate $\mathrm{LP_{R3}}$ as the following LP program, which is referred to as $\mathrm{LP'_{R3}}$:
\begin{align}
\mbox{($\mathrm{LP'_{R3}}$)}\quad & \min \sum_{e\in E}\bar{w}^{e} \label{eqn:lpr3prime:obj}\\
\mbox{s.t.}~ 
& \sum_{s\in V}\sum_{t\in T}q_{st}\beta_{s} + \sum_{s\in V}\sum_{v\in V\setminus \{s\}}q_{sv}\gamma_{sv} + \sum_{e\in E}\bar{w}^{e}\theta_{e} + \sum_{s\in V}\sum_{a\in A}\sum_{t\in T}q_{st}\zeta_{s,a} = 0, \nonumber\\
&
~~~~~~~~~~~~~~~~~~~~~~~~~~~~~~~~~~~~~~~~~~~~~~~~~~\mbox{ $\forall (\beta,\phi,\gamma,\theta,\psi,\zeta)\in \Gamma_{\tau},~\tau\in \Pi\setminus \{\tau_0\}$}, \label{eqn:lpr3prime:cut}\\
& \sum_{a \in A^+(s)}y^{a}_{\tau_0,s} = \sum_{t\in T}q_{st}, \mbox{ $\forall s\in V$}, \label{eqn:lpr3prime:F:y:s}\\
& \sum_{a \in A^-(v)}y^{a}_{\tau_0,s} = \sum_{a \in A^+(v)}y^{a}_{\tau_0,s}+q_{sv}, \mbox{ $\forall s\in V,~v\in V\setminus\{s\}$}, \label{eqn:lpr3prime:F:y:v}\\
&  \sum_{s\in V}(y^{\alpha_1(e)}_{\tau_0,s} + y^{\alpha_2(e)}_{\tau_0,s}) \leq \bar{w}^{e}, \mbox{ $\forall e\in E$}, \label{eqn:lpr3prime:F:def:w2}\\
& y^{\alpha_1(\tau_0)}_{\tau_0,s}+y^{\alpha_2(\tau_0)}_{\tau_0,s}=0, \mbox{ $\forall s\in V$},\label{eqn:lpr3prime:F:yy}\\
& 0\leq y^{a}_{\tau_0,s}\leq \sum_{t\in T}q_{st}, \mbox{ $\forall
s\in V,~a\in A$}, \label{eqn:lpr3prime:F:y}\\
& 0\leq \bar{w}^{e}\leq |K|, \mbox{ $\forall
e\in E$}. \label{eqn:lpr3prime:w}
\end{align}
\revxzc{In $\mathrm{LP'_{R3}}$, all the constraints of $\mathrm{LP_{R3}}$ defined for variables $y^{a}_{\tau,s}$ with $s\in V$ and $a\in A$ are replaced by constraints in (\ref{eqn:lpr3prime:cut}) for all the links $\tau\in \Pi\setminus \{\tau_0\}$. Here, $\tau_0$ is any link in $\Pi$. We keep those constrains defined for $y^{a}_{\tau_0,s}$ with $s\in V$ and $a\in A$ as valid inequalities, which, as we will elaborate later (after Proposition~\ref{prop:algorithm:speedup}), contribute to the acceleration of our solution algorithm for $\mathrm{LP'_{R3}}$.}

Let $z_{\mathrm{LP'_{R3}}}$ indicate the optimal objective value of $\mathrm{LP'_{R3}}$. Theorem~\ref{thm:LPR3} below shows that the new LP relaxation $\mathrm{LP'_{R3}}$ is equivalent to relaxation $\mathrm{LP_{R3}}$.

\begin{theorem}
\label{thm:LPR3}
$z_{\mathrm{LP'_{R3}}}=z_{\mathrm{LP_{R3}}}$.
\end{theorem}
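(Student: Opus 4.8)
The plan is to prove $z_{\mathrm{LP'_{R3}}}=z_{\mathrm{LP_{R3}}}$ by showing that the two programs admit the \emph{same projection onto the $\bar w$-space}. Both minimize the identical objective $\sum_{e\in E}\bar w^{e}$, which depends on $\bar w$ only and involves the flow variables $y$ solely through the constraints. Hence, if I let $W$ (respectively $W'$) denote the set of vectors $\bar w$ with $0\le \bar w^{e}\le |K|$ for all $e\in E$ that can be extended by some choice of the remaining variables into a feasible solution of $\mathrm{LP_{R3}}$ (respectively $\mathrm{LP'_{R3}}$), then $z_{\mathrm{LP_{R3}}}=\min_{\bar w\in W}\sum_{e\in E}\bar w^{e}$ and $z_{\mathrm{LP'_{R3}}}=\min_{\bar w\in W'}\sum_{e\in E}\bar w^{e}$, so it suffices to establish $W=W'$.

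First I would characterize $W$ directly from Proposition~\ref{prop:Ftauw}: a vector $\bar w$ (within the box bounds) lies in $W$ if and only if $F_{\tau}(\bar w)=0$ for every $\tau\in\Pi$. Next I would characterize $W'$ by splitting the constraints of $\mathrm{LP'_{R3}}$ into the cut block (\ref{eqn:lpr3prime:cut}) and the $\tau_0$-block (\ref{eqn:lpr3prime:F:y:s})--(\ref{eqn:lpr3prime:F:y}). For the cut block I would invoke the strong-duality equivalence stated just before the theorem: for each fixed $\tau$, constraint (\ref{eqn:lpr3prime:cut}) asserts exactly that the dual objective vanishes over all of $\Gamma_{\tau}$, which holds if and only if $F_{\tau}(\bar w)=0$; thus (\ref{eqn:lpr3prime:cut}) is equivalent to $F_{\tau}(\bar w)=0$ for all $\tau\in\Pi\setminus\{\tau_0\}$. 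For the $\tau_0$-block I would observe that (\ref{eqn:lpr3prime:F:y:s})--(\ref{eqn:lpr3prime:F:y}) are precisely the defining constraints of subproblem $F_{\tau_0}(\bar w)$ with $\epsilon$ set to $0$, so that a feasible $y_{\tau_0}$ exists for them if and only if $F_{\tau_0}(\bar w)=0$ (here I use that $F_{\tau_0}(\bar w)\ge 0$ always, since $\epsilon\ge 0$ is imposed, so a zero-violation flow is attainable exactly when the optimum equals $0$). Combining the two blocks gives $\bar w\in W'$ if and only if $F_{\tau}(\bar w)=0$ for all $\tau\in\Pi$, which is the very condition defining $W$; hence $W=W'$ and the theorem follows, mirroring the projection-style arguments already used for Lemma~\ref{lemma:ipr12} and Proposition~\ref{prop:R3-R2}.

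The one point that requires care, and which I regard as the main technical obstacle, is that the $\tau_0$-block of $\mathrm{LP'_{R3}}$ \emph{omits} the ``no inflow at the source'' equation $\sum_{a\in A^-(s)}y^{a}_{\tau_0,s}=0$ present in the subproblem (see (\ref{eqn:lpr3:F:y:s-})), so the kept inequalities are a priori a relaxation of the full $F_{\tau_0}$ feasibility system and I must verify they still encode $F_{\tau_0}(\bar w)=0$ exactly. I would resolve this by proving the omitted equality is implied: summing the balance equations (\ref{eqn:lpr3prime:F:y:v}) over all $v\in V\setminus\{s\}$ and subtracting the common flow total, then combining with the source equation (\ref{eqn:lpr3prime:F:y:s}), yields $\sum_{a\in A^-(s)}y^{a}_{\tau_0,s}=\sum_{t\in T}q_{st}-\sum_{v\in V\setminus\{s\}}q_{sv}$, which is $0$ because total supply at $s$ equals total demand (equivalently $q_{ss}=0$). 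With this redundancy established, the kept block is equivalent to the full $F_{\tau_0}$ system, closing the $W'\subseteq W$ direction; the $W\subseteq W'$ direction is immediate since any $\bar w\in W$ satisfies $F_{\tau}(\bar w)=0$ for all $\tau$, so it meets the cut block by strong duality and admits an $\epsilon=0$ flow for the $\tau_0$-block. The remaining steps are bookkeeping built on the previously established strong-duality characterization of $\Gamma_{\tau}$.
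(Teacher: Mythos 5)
Your proof is correct and follows essentially the same route as the paper's: both directions reduce, via Proposition~\ref{prop:Ftauw} and strong duality for the subproblems $F_{\tau}(\bar w)$, to the statement that a box-feasible $\bar w$ extends to a feasible solution of either program exactly when $F_{\tau}(\bar w)=0$ for all $\tau\in\Pi$. Your extra check that the omitted source-inflow equation $\sum_{a\in A^-(s)}y^{a}_{\tau_0,s}=0$ is implied by (\ref{eqn:lpr3prime:F:y:s}) and (\ref{eqn:lpr3prime:F:y:v}) (using $\sum_{t\in T}q_{st}=\sum_{v\in V\setminus\{s\}}q_{sv}$) addresses a detail the paper's proof passes over silently, and your derivation of it is sound.
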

\begin{proof}{Proof}
On the one hand, if $(\bar{w},y)$ forms a feasible solution to $\mathrm{LP_{R3}}$, then by Proposition~\ref{prop:R3-R2}, we have $F_{\tau}(\bar{w})=0$ for $\tau\in \Pi\setminus \{\tau_0\}$. By strong duality we obtain that constraints in (\ref{eqn:lpr3prime:cut}) are all satisfied by $w$ for all $\tau\in \Pi\setminus \{\tau_0\}$. Thus, it can be seen that $\bar{w}^{e}$ for $e\in E$ and $y_{\tau_0,s}^{a}$ for $s\in V$ and $a\in A$ form a feasible solution to $\mathrm{LP'_{R3}}$.

On the other hand, consider any feasible solution to $\mathrm{LP'_{R3}}$, consisting of $\bar{w}^{e}$ for $e\in E$ and $y_{\tau_0,s}^{a}$ for $s\in V$ and $a\in A$. Since the constraints in (\ref{eqn:lpr3prime:cut}) are all satisfied  by $w$ for all $\tau\in \Pi\setminus \{\tau_0\}$, by strong duality we obtain that $F_{\tau}(\bar{w})=0$ for $\tau\in \Pi\setminus \{\tau_0\}$. Due to constraints (\ref{eqn:lpr3prime:F:y:s})--(\ref{eqn:lpr3prime:w}), we know that $F_{\tau_0}(\bar{w})=0$.  By Proposition~\ref{prop:R3-R2}, we obtain that $(\bar{w},y)$ forms a feasible solution to $\mathrm{LP_{R3}}$.

Thus, since $\mathrm{LP_{R3}}$ and $\mathrm{LP'_{R3}}$ also have the same objective value, we obtain that $\mathrm{LP_{R3}}$ and $\mathrm{LP'_{R3}}$ are equivalent, implying that $z_{\mathrm{LP_{R3}}}=z_{\mathrm{LP'_{R3}}}$. Theorem~\ref{thm:LPR3} is proved.
\end{proof}

According to Theorem~\ref{thm:LPR3}, to solve relaxation $\mathrm{LP_{R3}}$, we only need to solve $\mathrm{LP'_{R3}}$. It can be seen that $\mathrm{LP'_{R3}}$ contains only $O(|V||E|)$ variables, fewer than $\mathrm{LP_{R3}}$. It can also be seen that when constraints $(\ref{eqn:lpr3prime:cut})$ are excluded, $\mathrm{LP'_{R3}}$ contains only $O(|V||E|)$ constraints, much fewer than $\mathrm{LP_{R3}}$.

In $\mathrm{LP'_{R3}}$, the number of constraints in (\ref{eqn:lpr3prime:cut}) is very large. We, therefore, add them iteratively. In particular, let $\Gamma'_{\tau}$ indicate the values of $(\beta,\phi,\gamma,\theta,\psi,\zeta)\in \Gamma_{\tau}$ whose associated constraints in (\ref{eqn:lpr3prime:cut}) have been added. We initialize $\Gamma'_{\tau}$ to be an empty set. We refer to the relaxation of $\mathrm{LP'_{R3}}$ with $\Gamma_{\tau}$ replaced by $\Gamma'_{\tau}$ as the \textit{restricted master problem}. In each iteration of our Benders decomposition algorithm, we first solve the restricted master problem to optimality, where the corresponding optimal solution is denoted by $(\tilde{w},\tilde{y}_{\tau_0})$. We then solve the subproblem $F_{\tau}(\tilde{w})$ for $\tau\in \Pi$. If $F_{\tau}(\tilde{w})= 0$ for all $\tau\in \Pi$, then we obtain that $(\tilde{w},\tilde{y}_{\tau_0})$ is an optimal solution to $\mathrm{LP'_{R3}}$. Otherwise, for each $\tau\in \Pi$, since $F_{\tau}(\tilde{w})\geq 0$, we have $F_{\tau}(\tilde{w})>0$. We then obtain the optimal solution $(\beta,\phi,\gamma,\theta,\psi,\zeta)\in \Gamma_{\tau}$ to the dual of the subproblem $F_{\tau}(\tilde{w})$, and add $(\beta,\phi,\gamma,\theta,\psi,\zeta)$ to $\Gamma'_{\tau}$ so as to include its associated constraint in (\ref{eqn:lpr3prime:cut}) in the restricted master problem.

To further speed up the algorithm described above, we establish Proposition~\ref{prop:algorithm:speedup} below.
\begin{proposition}
\label{prop:algorithm:speedup}
Given any $(\bar{w},y_{\tau_0})$ that satisfies constraints (\ref{eqn:lpr3prime:F:y:s})--(\ref{eqn:lpr3prime:w}) of $\mathrm{LP'_{R3}}$, we have $F_{\tau}(\bar{w})=0$ for each $\tau\in \Pi$ with $y_{\tau_0,s}^{\alpha_1(\tau)}=y_{\tau_0,s}^{\alpha_2(\tau)}=0$ for all $s\in V$.
\end{proposition}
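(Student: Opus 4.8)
The plan is to exhibit, for each eligible failed link $\tau$, an explicit flow that is feasible for the subproblem $F_{\tau}(\bar{w})$ and attains objective value $0$. Since $F_{\tau}(\bar{w})$ minimizes $\epsilon$ over a region in which $\epsilon \geq 0$, its optimal value is always non-negative; it therefore suffices to produce a single feasible point with $\epsilon = 0$, which forces $F_{\tau}(\bar{w}) = 0$. The natural candidate is to reuse the flow $y_{\tau_0}$ already supplied: I would set $y^{a}_{\tau,s} := y^{a}_{\tau_0,s}$ for all $s \in V$ and $a \in A$, and $\epsilon := 0$, and then verify feasibility constraint by constraint.

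The structural observation driving the verification is that almost none of the constraints defining $F_{\tau}(\bar{w})$ actually depend on the index $\tau$. The source equations (\ref{eqn:lpr3:F:y:s}), the flow-conservation equations (\ref{eqn:lpr3:F:y:v}), the capacity/no-clash inequalities (\ref{eqn:lpr3:F:def:w2}) (which, with $\epsilon = 0$, read $\sum_{s}(y^{\alpha_1(e)}_{\tau,s}+y^{\alpha_2(e)}_{\tau,s}) \leq \bar{w}^{e}$), and the bounds (\ref{eqn:lpr3:F:y}) are identical in form to the constraints (\ref{eqn:lpr3prime:F:y:s}), (\ref{eqn:lpr3prime:F:y:v}), (\ref{eqn:lpr3prime:F:def:w2}), and (\ref{eqn:lpr3prime:F:y}) of $\mathrm{LP'_{R3}}$ that $(\bar{w}, y_{\tau_0})$ already satisfies; since the copied flow equals $y_{\tau_0}$ componentwise, each of these transfers directly. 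The only genuinely $\tau$-specific requirement is the link-avoidance equation (\ref{eqn:lpr3:F:yy}), namely $y^{\alpha_1(\tau)}_{\tau,s}+y^{\alpha_2(\tau)}_{\tau,s}=0$ for all $s$, and this is exactly where the hypothesis is consumed: the assumption $y^{\alpha_1(\tau)}_{\tau_0,s}=y^{\alpha_2(\tau)}_{\tau_0,s}=0$ for all $s \in V$ guarantees that the copied flow carries no traffic on either orientation of $\tau$, so (\ref{eqn:lpr3:F:yy}) holds automatically.

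The one point that will require a little care is the no-inflow-at-source equation (\ref{eqn:lpr3:F:y:s-}), namely $\sum_{a \in A^-(s)} y^{a}_{\tau,s} = 0$, because $\mathrm{LP'_{R3}}$ lists no analogous constraint among (\ref{eqn:lpr3prime:F:y:s})--(\ref{eqn:lpr3prime:w}). I would handle this by showing it is implied: summing the conservation equations (\ref{eqn:lpr3prime:F:y:v}) over all $v \in V \setminus \{s\}$ makes the internal arc contributions telescope, leaving $\sum_{a \in A^+(s)} y^{a}_{\tau_0,s} - \sum_{a \in A^-(s)} y^{a}_{\tau_0,s} = \sum_{v \neq s} q_{sv} = \sum_{t\in T} q_{st}$, and combining this with the source equation (\ref{eqn:lpr3prime:F:y:s}) forces $\sum_{a \in A^-(s)} y^{a}_{\tau_0,s} = 0$. (Equivalently, one may note that $(\bar{w}, y_{\tau_0})$ feasible for the $\tau_0$-block of $\mathrm{LP'_{R3}}$ already certifies $F_{\tau_0}(\bar{w})=0$, as used in the proof of Theorem~\ref{thm:LPR3}, so $y_{\tau_0}$ is itself feasible for $F_{\tau_0}(\bar{w})$ and in particular satisfies the no-inflow equation.) With every constraint of $F_{\tau}(\bar{w})$ verified at $\epsilon = 0$, I conclude $F_{\tau}(\bar{w}) \leq 0$, hence $F_{\tau}(\bar{w}) = 0$. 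There is no deep obstacle here; the proposition is essentially the observation that the failed-link index enters the subproblem only through the avoidance constraint, so a flow that already dodges $\tau$ serves verbatim as a witness for the $\tau$-scenario.
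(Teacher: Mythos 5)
Your proposal is correct and follows exactly the paper's argument: take $y_{\tau_0}$ together with $\epsilon=0$ as a feasible solution to $F_{\tau}(\bar{w})$, with the hypothesis on $\alpha_1(\tau),\alpha_2(\tau)$ supplying the only $\tau$-specific constraint (\ref{eqn:lpr3:F:yy}). The paper simply states that feasibility "can be verified," whereas you additionally spell out the telescoping argument showing the no-inflow-at-source equation (\ref{eqn:lpr3:F:y:s-}) is implied by (\ref{eqn:lpr3prime:F:y:s}) and (\ref{eqn:lpr3prime:F:y:v}) — a worthwhile detail, but not a different proof.
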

\begin{proof}{Proof}
When $y_{\tau_0,s}^{\alpha_1(\tau)}=y_{\tau_0,s}^{\alpha_2(\tau)}=0$ for all $s\in V$, it can be verified that $y^{a}_{\tau_0,s}$ for $s\in V$ and $a\in A$ and $\epsilon=0$ form a feasible solution to $F_{\tau}(\bar{w})$. Thus, $F_{\tau}(\bar{w})=0$. Proposition~\ref{prop:algorithm:speedup} is proved.
\end{proof}

\begin{remark}[Acceleration]
\revxzc{Consider the optimal solution  $(\tilde{w},\tilde{y}_{\tau_0})$ obtained for the restricted master problem in each iteration of our algorithm described above. Let $\Pi'$ indicate the set of links $\tau\in \Pi$ with $\tilde{y}_{\tau_0,s}^{\alpha_1(\tau)}=\tilde{y}_{\tau_0,s}^{\alpha_2(\tau)}=0$ for all $s\in V$. Proposition~\ref{prop:algorithm:speedup} implies that $F_{\tau}(\bar{w})=0$ for every link $\tau\in \Pi'$. As a result, we only need to solve the subproblems $F_{\tau}(\tilde{w})$ for $\tau\in \Pi\setminus \Pi'$. This reduction in the number of subproblems to solve significantly accelerates the Benders decomposition algorithm. It also demonstrates the benefit of having model $\mathrm{LP'_{R3}}$ keep the constraints defined for $y^{a}_{\tau_0,s}$ with $s\in V$ and $a\in A$ as valid inequalities.}
\end{remark}

\revxzc{Algorithm~\ref{alg:benders} below summarizes our Benders decomposition algorithm to solve $\mathrm{LP_{R3}}$. In its Step~5, when $F_{\tau}(\tilde{w})=0$ for all $\tau\in \Pi\setminus  \Pi'$, we know that $\tilde{w}$ and $\tilde{y}_{\tau_0}$ form an optimal solution to $\mathrm{LP'_{R3}}$, which, according to the proof of Theorem~\ref{thm:LPR3}, is also  an optimal solution to $\mathrm{LP_{R3}}$.}

\begin{algorithm}[h]
\caption{Benders Decomposition Algorithm for Solving $\mathrm{LP_{R3}}$}
\label{alg:benders}
\begin{enumerate}[itemsep=0pt,parsep=0pt]
\item Initialize $\Gamma'_{\tau}=\emptyset$ for each $\tau\in \Pi$.
\item Solve the restricted master problem of $\mathrm{LP'_{R3}}$ to obtain its optimal solution $(\tilde{w},\tilde{y}_{\tau_0})$, where $\tau_0$ is chosen arbitrarily from $\Pi$.
\item Let $\Pi'$ indicate the set of links $\tau\in \Pi$ with $\tilde{y}_{\tau_0,s}^{\alpha_1(\tau)}=\tilde{y}_{\tau_0,s}^{\alpha_2(\tau)}=0$ for all $s\in V$.
\item Solve subproblem $F_{\tau}(\tilde{w})$ for each $\tau\in \Pi\setminus \Pi'$.
\item If $F_{\tau}(\tilde{w})=0$ for all $\tau\in \Pi\setminus  \Pi'$, then 
return \revxzc{$(\tilde{w},\tilde{y}_{\tau_ 0})$ } as the optimal solution to $\mathrm{LP'_{R3}}$.
\item Otherwise, for each $\tau\in \Pi\setminus \Pi'$ with $F_{\tau}(\tilde{w})>0$, add to $\Gamma'_{\tau}$ the optimal solution $(\beta,\phi,\gamma,\theta,\psi,\zeta)\in \Gamma_{\tau}$ to the dual of $F_{\tau}(\tilde{w})$, and go to Step~2.
\end{enumerate}
\end{algorithm}

\section{Computational Experiment}
\label{sec:experiments}
Our computational experiment aims to examine (i) the performance of the newly proposed LP relaxation $\mathrm{LP_{R3}}$ in producing lower bounds for the RWAP-PPP, and (ii) the performance of the newly developed Benders decomposition algorithm (Algorithm~\ref{alg:benders}) in solving $\mathrm{LP_{R3}}$. Our Benders decomposition algorithm was coded in C++ and run on a PC in Windows 11 system with an Intel(R) Core(TM) i9-11950H 2.60GHz CPU (8 Cores) and 32 GB of RAM. An open-source LP solver of CLP 1.17.7 \citep{forrest2022coinclp} was used to solve all the restricted master problems and the subproblems, encountered in our algorithm. To establish benchmark results, we also employed a widely recognized commercial LP solver of CPLEX 22.1.0 to solve $\mathrm{LP_{RWAP}}$ and $\mathrm{LP_{R3}}$, for which the time limit was set to be 3 hours. Instead of directly solving $\mathrm{LP_{RWAP}}$, we initially transformed it into an equivalent aggregated reformulation using the approach proposed by \cite{jaumard2006ilp}. This allowed us to reduce the number of decision variables and constraints involved in $\mathrm{LP_{RWAP}}$. \revxzc{(It is worthy noting that we also conducted tests by adopting the CPLEX solver for the Benders decomposition algorithm and the CLP solver for directly solving $\mathrm{LP_{RWAP}}$ and $\mathrm{LP_{R3}}$. However, the resulting performances were inferior compared to the adoption introduced above.)}



Our computational experiment was based on 16 practical problem instances from Huawei Technologies. Due to confidentiality, original problem instances and their computational results cannot be disclosed. Therefore, we only present results for instances that were generated by making some random modifications to the original problem instances. Specifically, the vertices were shuffled randomly, and the vertices, edges, and requests were randomly modified while ensuring the feasibility of solutions. Nevertheless, we have observed that these modified test instances demonstrate similar statistical characteristics, and their computational results yield findings that align with those obtained from the original problem instances. The statistical characteristics of our randomized test instances are illustrated in columns $|V|$, $|E|$, and $|D|$ of Table~\ref{tab:result}. The number of available wavelengths $|K|$ equals 80. All links in $E$ are possible to fail so that $\Pi=E$. These instances will be made available online for future studies. 

\newcommand{\comments}[1]{%
    \linespread{0.0}\vspace{-10pt}%
    \raggedright\small\textit{#1}%
}

\begin{table}[t]
{
     \setlength{\tabcolsep}{1.5mm}
    \centering
    \caption{Computational Results on Solving $\mathrm{LP_{RWAP}}$ and $\mathrm{LP_{R3}}$.}
    \label{tab:result}
{\small    
\begin{tabular}{lllllllllllllll}			\hline																				
\multirow{2}{*}{No.}	&	\multirow{2}{*}{$|V|$}	&	\multirow{2}{*}{$|E|$}	&	\multirow{2}{*}{$|D|$}	&	\multirow{2}{*}{UB}	&&	\multicolumn{3}{c}{$\mathrm{LP}_{\mathrm{RWAP}}$}			&&	\multicolumn{5}{c}{$\mathrm{LP_{R3}}$}									\\ \cline{7-9} \cline{11-15}
	&		&		&		&		&&	Obj	& Gap\%	&	$\mathrm{T_{CPLEX}}$	&&	Obj	&	Im\%	&	Gap\%	&	$\mathrm{T_{Benders}}$	&	$\mathrm{T_{CPLEX}}$	\\\hline
1	&	59	&	184	&	591	&	2019	&&	1626 & 24.2	&	0.1	&&	1887	&	16.1	&	7.0	&	53.6 	&	40.0	\\
2	&	60	&	152	&	443	&	1675	&&	1239 & 35.2	&	0.1	&&	1532	&	23.6	&	9.3	&	6.0 	&	6.4	\\
3	&	70	&	227	&	689	&	2788	&&	2265 & 23.1	&	0.3	&&	2606	&	15.1	&	7.0	&	221.0 	&	195.7	\\
4	&	78	&	155	&	690	&	2257	&&	1404 & 60.8	&	0.1	&&	2142	&	52.6	&	5.4	&	13.2 	&	66.9	\\
5	&	80	&	256	&	784	&	3168	&&	2588 & 22.4	&	0.3	&&	2956	&	14.2	&	7.2	&	275.3 	&	357.1	\\
6	&	89	&	185	&	765	&	3551	&&	2241 & 58.5	&	0.1	&&	3191	&	42.4	&	11.3	&	30.7 &	32.6	\\
7	&	90	&	288	&	906	&	3873	&&	3238 & 19.6	&	0.5	&&	3623	&	11.9	&	6.9	&	697.0 	&	404.8	\\
8	&	100	&	330	&	983	&	5834	&&	4640 & 25.7	&	0.5	&&	5461	&	17.7	&	6.8	&	883.2 	&	4724.9	\\
9	&	119	&	249	&	899	&	4572	&&	3030 & 50.9	&	0.4	&&	4116	&	35.8	&	11.1	&	112.4 	&	138.0	\\
10	&	128	&	421	&	1852	&	10349	&&	8562 & 20.9	&	2.0	&&	9712	&	13.4	&	6.6	&	6028.0 	&	NA (0.0\%)	\\
11	&	131	&	441	&	2699	&	13109	&&	11244 & 16.6	&	1.9	&&	12462	&	10.8	&	5.2	&	7961.5 	&	NA (0.0\%)	\\
12	&	152	&	359	&	1801	&	9007	&&	6173 & 45.9	&	0.5	&&	7892	&	27.8	&	14.1	&	997.9 &	7581.7	\\
13	&	173	&	386	&	579	&	4465	&&	3130 & 42.7	&	0.7	&&	4084	&	30.5	&	9.3	&	796.9 	&	7408.8	\\
14	&	178	&	394	&	1579	&	7440	&&	5451 & 36.5	&	1.9	&&	6716	&	23.2	&	10.8	&	8939.6 	&	NA	\\
15	&	225	&	441	&	662	&	6205	&&	4255 & 45.8	&	1.2	&&	5723	&	34.5	&	8.4	&	949.4 	&	NA	\\
16	&	292	&	588	&	1870	&	5820	&&	3667 & 58.7	&	1.3	&&	5246	&	43.1	&	10.9	&	8402.5 	&	NA	\\\hline
\end{tabular}																				
}
}

\textit {Note: If $\mathrm{LP_{R3}}$ is not solved to optimum by CPLEX within the 3-hour time limit, ``NA" is displayed in $\mathrm{T_{CPLEX}}$, where the number in the bracket indicates the percentage gap between the best lower bound found within the time limit and the optimum objective value of $\mathrm{LP_{R3}}$.}
\end{table}

To examine the performance of $\mathrm{LP_{R3}}$, we compare the lower bound produced by $\mathrm{LP_{R3}}$ with that by $\mathrm{LP_{RWAP}}$, as shown in columns Obj of Table~\ref{tab:result}. The improvement percentage, defined as $(z_{\mathrm{LP_{R3}}}-z_{\mathrm{LP_{RWAP}}})/z_{\mathrm{LP_{RWAP}}}\times 100\%$, is presented in column Im\%. The higher its value is, the more improvement is made in the lower bound by $\mathrm{LP_{R3}}$ against $\mathrm{LP_{RWAP}}$. Moreover, we also compare the lower bound produced by $\mathrm{LP_{R3}}$ with the best-known upper bound produced by a heuristic algorithm currently used in Huawei Technologies, which is shown in column UB. The optimality gap, defined as $(\mathrm{UB}-z_{\mathrm{LP_{R3}}})/z_{\mathrm{LP_{R3}}}\times 100\%$, is presented in column Gap\% \revxzc{under $\mathrm{LP_{R3}}$} of Table~\ref{tab:result}. The lower its value, the tighter the lower bound produced by $\mathrm{LP_{R3}}$. For the purpose of comparisons, column Gap\%\revxzc{, defined as $(\mathrm{UB}-z_{\mathrm{LP_{RWAP}}})/z_{\mathrm{LP_{RWAP}}}\times 100\%$, is also presented for $\mathrm{LP_{RWAP}}$}.


From column Im\% under $\mathrm{LP_{R3}}$ of Table~\ref{tab:result}, we can see that our new LP relaxation $\mathrm{LP_{R3}}$ produces significantly better lower bounds for the RWAP-PPP than $\mathrm{LP_{RWAP}}$. Among all the 16 instances, the improvement ratio ranges from 10.8\% to 52.6\%, with an average of 25.8\%, which is significant. It is worth noting that Propositions~\ref{proposition:zlpr2} and \ref{prop:R3-R2}  highlight the potential for $\mathrm{LP_{R3}}$ to achieve significantly improved lower bounds over $\mathrm{LP_{RWAP}}$ in certain extreme situations, although such situations may not commonly occur in practice. Empirical evidence from our experimental results here supports the observation that the improvement is indeed substantial. 
\revxzc{Moreover, from columns Gap\% of Table~\ref{tab:result}, we can also observe that our new LP relaxation $\mathrm{LP_{R3}}$ achieves an optimality gap ranging from 5.2\% to 14.1\%, with an average of only 8.6\%. In contrast, the LP relaxation $\mathrm{LP_{RWAP}}$ has a significantly larger optimality gap ranging from 16.6\% to 60.8\%, with an average of 36.7\%.}

To examine the performance of our Benders decomposition algorithm, we compare its computational time with that of the CPLEX LP solver in solving $\mathrm{LP_{R3}}$. Their running times (in seconds) for solving $\mathrm{LP_{R3}}$ are shown in columns $\mathrm{T_{Benders}}$ and $\mathrm{T_{CPLEX}}$ under $\mathrm{LP_{R3}}$ of Table~\ref{tab:result}. When the CPLEX LP solver was applied to solve $\mathrm{LP_{R3}}$ directly, it was unable to solve 5 of the 16 instances to optimality within the 3 hour time limit. Such instances are indicated by ``NA" in column $\mathrm{T_{CPLEX}}$ under $\mathrm{LP_{R3}}$ of Table~\ref{tab:result}. Out of such 5 instances, instances 10 and 11 resulted in the CPLEX LP solver finding the best lower bound that equal the optimal objective value of $\mathrm{LP_{R3}}$, but being unable to prove their optimality within the time limit. Instances 14, 15, and 16 resulted in CPLEX LP solver failing to find any positive lower bound on the optimal objective value of $\mathrm{LP_{R3}}$. 


From columns $\mathrm{T_{Benders}}$ and $\mathrm{T_{CPLEX}}$ under $\mathrm{LP_{R3}}$ of Table~\ref{tab:result}, we can see that our Benders decomposition algorithm is significantly more efficient than the CPLEX LP solver in solving the new LP relaxation $\mathrm{LP_{R3}}$. The Benders decomposition algorithm terminated and solved $\mathrm{LP_{R3}}$ to optimality within 3 hours for all the 16 instances, while the CPLEX LP solver did so for only 11 instances. For these 11 instances, which were solved to optimality by both algorithms, the Benders decomposition algorithm took only 371.6 seconds on average, significantly faster than the CPLEX LP solver, which took 1905.2 seconds on average.

The computational results have demonstrated that the newly proposed LP relaxation $\mathrm{LP_{R3}}$ and the newly developed Benders decomposition algorithm can be applied to effectively assess solution qualities for the RWAP-PPP. However, although our new LP relaxation $\mathrm{LP_{R3}}$ provides significant better lower bounds for the RWAP-PPP than $\mathrm{LP_{RWAP}}$, it is considerably more difficulty to solve. From column  $\mathrm{T_{CPLEX}}$ under $\mathrm{LP_{RWAP}}$ of Table~\ref{tab:result}, we can see that the CPLEX LP solvers have demonstrated remarkable efficiency in solving $\mathrm{LP_{RWAP}}$, with an average running time of only 0.7 seconds. This is attributed to the reduction of variables and constraints achieved through the equivalent transformation of $\mathrm{LP_{RWAP}}$ as proposed in \cite{jaumard2006ilp}.

\section{Conclusions}
\label{sec:conclusions}
\revxzc{We study LP relaxations of the RWAP-PPP, aiming to derive an effective lower bound on the optimal objective value of the RWAP-PPP that can be efficiently computed for large-scale practical telecommunication networks. To achieve this, we propose a novel LP relaxation of the RWAP-PPP by relaxing some of the linking constraints of a direct LP relaxation of the RWAP-PPP, so that the numbers of variables and constraints are significantly reduced. The new LP relaxation yields improved lower bounds for the RWAP-PPP compared to a direct LP relaxation of the RWAP. We prove that these improvements can be arbitrarily large. By incorporating a set of valid inequalities, we then develop a Benders decomposition algorithm. Computational results show that the algorithm can efficiently solve our new LP relaxation of the RWAP-PPP for some large-scale practical networks. The new lower bounds obtained are found to significantly improve the lower bounds obtained from a direct LP relaxation of the RWAP by 10.8\% to 52.6\%. As a result, our new LP relaxation and its solution algorithm can be used to effectively assess the quality of heuristic solutions for the RWAP-PPP, which has great research and practical values.}

In the future, we will investigate the improvement of relaxations of the RWAP-PPP, as well as the algorithms required to solve the relaxations. Deriving near-optimal heuristic solutions for the RWAP-PPP using these relaxations is also interesting, but it is challenging, particularly for large-scale networks.
Furthermore, the optimization model and solution method developed in our study can be adapted to other problem settings of the RWAP-PPP. This includes scenarios with different objective functions commonly considered in the RWAP literature. For instance, if the objective is to minimize the total number of used wavelengths (as seen in \cite{lee2002optimization} and \cite{hu2004traffic}), we can introduce new binary decision variables $w^{\kappa}$ to indicate whether wavelength $\kappa$ is used or not for each $\kappa\in K$. By replacing $w^{\kappa,e}$ with $w^{\kappa}$, the IP model $\mathrm{IP_{RWAP-PPP}}$ can be adapted for the new objective. Consequently, the LP relaxations and the Benders decomposition algorithm can also be adjusted accordingly. Similarly, if the objective is to minimize the maximum number of wavelengths assigned over all the links (as discussed in \cite{jaumard2006ilp}), we can introduce a new decision variable $\theta$ to represent the maximum number of wavelengths assigned. By replacing the objective with $\min \theta$ and incorporating new constraints $\theta\geq \sum_{k\in K}w^{k,e}$ for all $e\in E$, the IP model $\mathrm{IP_{RWAP-PPP}}$ can be adapted for this objective. Again, the LP relaxations and the Benders decomposition algorithm can also be modified accordingly. In future studies, for these new problem settings, it would be interesting to investigate the tightness of the lower bounds obtained from the LP relaxations and assess the efficiency of the Benders decomposition algorithm.	

\bibliographystyle{apalike}
\bibliography{references}
\setcounter{equation}{0}
\renewcommand{\theequation}{A.\arabic{equation}}
\allowdisplaybreaks

\begin{appendices}
\setcounter{equation}{0}
\renewcommand{\theequation}{\thesection.\arabic{equation}}
\section{An Illustartive Example for the RWAP-PPP}
\label{sec:app:eg:rwappp}

Consider an instance of the RWAP-PPP shown in Figure~\ref{fig:eg} below. 

\begin{figure}[h]
    \centering
    \caption{Example of the RWAP-PPP with two communication requests $(1,3)$ and $(4,3)$.}
    \label{fig:eg}
    \scalebox{0.8}{	\includegraphics{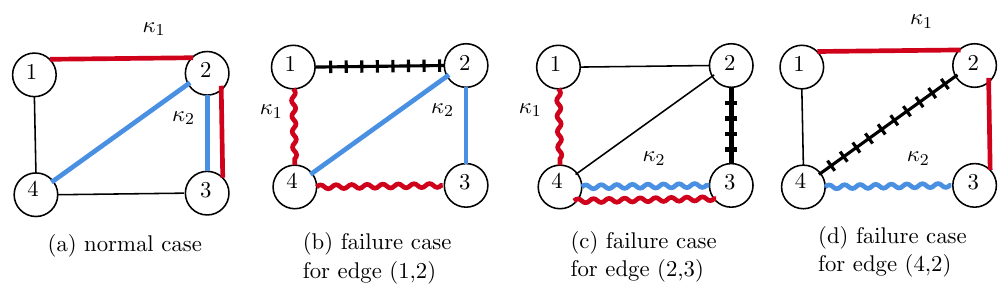}}
\end{figure}

For request $(1,3)$, we assign it a working path denoted by $(1,2,3)$ and assign each link of the path, including $\{1,2\}$ and $\{2,3\}$, the same working wavelength denoted by $\kappa_1$. For request $(4,3)$, we assign it a working path denoted by $(4,2,3)$ and assign each link of the path, including $\{4,2\}$ and $\{2,3\}$, the same working wavelength denoted by $\kappa_2$, where $\kappa_2$ and $\kappa_1$ are different. As shown in Figure~\ref{fig:eg}(a), these form a feasible solution to the RWAP with the total number of assigned wavelengths of all the links equal to 4, including one wavelength for link $\{1,2\}$, one wavelength for link $\{4,2\}$, and two wavelengths for link $\{2,3\}$. Next, consider $\Pi = \{\{1,2\},\{2,3\},\{2,4\}\}$, indicating that each of these three links in $\Pi$ is possible to fail. We assign backup paths and backup wavelengths for the following three possible cases of link failures:
\begin{itemize}
    \item For the case where link $\{1,2\}$ has failed, as shown in Figure~\ref{fig:eg}(b), request $(4,3)$ can still be satisfied by its working path $(4,2,3)$ and working wavelength $\kappa_2$. To satisfy request $(1,3)$ we assign it a backup path denoted by $(1, 4, 3)$ and assign $\kappa_1$ as the backup wavelength to all the links of the backup path, including $\{1,4\}$ and $\{4,3\}$. 
    \item For the case where link $\{2,3\}$ has failed, as shown in Figure~\ref{fig:eg}(c), to satisfy request $(1,3)$ we assign it a backup path denoted by $(1,4,3)$ and assign $\kappa_1$ as the backup wavelength to all the links of the path, including $\{1,4\}$ and $\{4,3\}$. To satisfy request $(4,3)$, we assign it a backup path denoted by $(4,3)$ and assign $\kappa_2$ as the backup wavelength to the link $\{4,3\}$ of the backup path. 
    \item For the case where link $\{4,2\}$ has failed, as shown in Figure~\ref{fig:eg}(d), request $(1,3)$ can still be satisfied by its working path $(1,2,3)$ and working wavelength $\kappa_1$. To satisfy request $(4,3)$ we assign it a backup path denoted by $(4,3)$ and assign $\kappa_2$ as the backup wavelength to the only link $\{4,3\}$ of the backup path.
\end{itemize}
Both the working and backup paths, as well as their assigned working and backup wavelengths, form a feasible solution to the RWAP-PPP, in which wavelength $\kappa_1$ is assigned to links $\{1,2\}$, $\{2,3\}$, $\{1,4\}$, $\{4,3\}$, and wavelength $\kappa_2$ is assigned to links $\{4,2\}$, $\{2,3\}$, and $\{4,3\}$. Hence, the objective value of this solution to the RWAP-PPP equals 7.
\end{appendices}

\end{document}